\newtheorem{thm}{Theorem}[section]
\newtheorem{lem}[thm]{Lemma}
\newtheorem{prop}[thm]{Proposition}
\newtheorem{conj}[thm]{Conjecture}
\newtheorem{que}[thm]{Question}
\theoremstyle{definition} 
\newtheorem{defi}[thm]{Definition}
\newenvironment{ex}
{\pushQED{\qed}\exx}
{\popQED\endexx}
\newcommand{\K}{\mathbb K}
\newcommand{\I}{\mathcal I}
\newcommand{\J}{\mathcal J}
\newcommand{\E}{\mathcal E}
\newcommand{\Ll}{\mathcal L}
\newcommand{\Pp}{\mathcal P}
\newcommand{\R}{\mathcal R}
\title{Products of polymatroids with the strong exchange property}
\author{Lisa Nicklasson}
\address{\small Max-Planck Institute for Mathematics in the Sciences, Leipzig, Germany\\
lisa.nicklasson@mis.mpg.de}
\date{}
\begin{document}

\begin{abstract}
	\noindent It was conjectured by White in 1980 that the toric ring associated to a matroid is defined by symmetric exchange relations. This conjecture was extended to discrete polymatroids by Herzog and Hibi, and they prove that the conjecture holds for polymatroids with the so called \emph{strong exchange property}. In this paper we generalize their result to polymatroids that are products of polymatroids with the strong exchange property. This also extends a result by Conca on transversal polymatroids.
\end{abstract}

\maketitle

\section{Polymatroidal bases}
Let $\K$ be a field, and let $\K[X]$ denote the polynomial ring over $\K$ on the set of variables $X=\{x_1, \ldots, x_n\}$. We equip $\K[X]$ with the standard grading, meaning that $\deg(x_1^{\alpha_1} \cdots x_n^{\alpha_n}) = \alpha_1 + \dots + \alpha_n$. We will also use the notation $\deg_i(x_1^{\alpha_1} \cdots x_n^{\alpha_n})=\alpha_i$. 

\begin{defi}
We call a set $B$ of monomials in $\K[X]$ a \emph{polymatroidal basis} if all the monomials in $B$ are of the same degree and the following property holds. If $f$ and $g$ are monomials in $B$ with $\deg_if>\deg_ig$ for some $i$, then there is a $j$ such that $\deg_jf<\deg_jg$ and $\frac{x_j}{x_i}f \in B$. 
\end{defi}
The name \emph{polymatroidal} comes from the fact that a polymatroidal basis is the set of base elements of a discrete polymatroid, phrased in the language of monomials. An ideal generated by a polymatroidal basis is called a \emph{polymatroidal ideal.} The structure of a polymatroidal basis is actually more symmetric than the definition reveals. 

\begin{thm}[Symmetric exchange property]\label{thm:sym_exchange}
Let $B$ be a polymatroidal base, and suppose $f, g \in B$ with $\deg_if>\deg_ig$. Then there is a $j$ with $\deg_jf<\deg_jg$ such that $\frac{x_j}{x_i}f, \frac{x_i}{x_j}g \in B$. 
\end{thm}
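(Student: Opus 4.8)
The plan is to pass from the exchange-property definition to the language of submodular rank functions, where the symmetric statement falls out of a short manipulation of tight sets. Recall the standard dictionary for discrete polymatroids: a set $B$ of monomials of the same degree $r$ is a polymatroidal basis precisely when there is a submodular, nondecreasing function $\rho\colon 2^{\{1,\dots,n\}}\to\Z_{\ge 0}$ with $\rho(\emptyset)=0$ and $\rho(\{1,\dots,n\})=r$ such that, writing $u(S):=\sum_{k\in S}\deg_k u$, one has $u\in B$ if and only if $\deg u=r$ and $u(S)\le\rho(S)$ for all $S$. Call a subset $S$ \emph{$u$-tight} when $u(S)=\rho(S)$. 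The key elementary observation is: if $u\in B$ and $\deg_i u\ge 1$, then $\frac{x_j}{x_i}u\in B$ if and only if there is no $u$-tight set containing $j$ but not $i$ — decreasing $\deg_i$ only relaxes the inequalities $u(S)\le\rho(S)$, while increasing $\deg_j$ by one can violate $u(S)\le\rho(S)$ only on a set that contains $j$, is tight, and omits $i$.

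The argument I would then run is by contradiction. Assume no $j$ satisfies the three conclusions, fix $i$ with $\deg_i f>\deg_i g$, and put $A=\{\,k:\deg_k f<\deg_k g\,\}$, which is nonempty because $\deg f=\deg g$. For $j\in A$ we have $\deg_i f\ge 1$ and $\deg_j g\ge 1$, so $\frac{x_j}{x_i}f$ and $\frac{x_i}{x_j}g$ are monomials of degree $r$ and, by hypothesis, at least one of them is not in $B$; by the observation this produces, for each $j\in A$, either an $f$-tight set containing $j$ but not $i$, or a $g$-tight set containing $i$ but not $j$. Let $S^*$ be the union of all $f$-tight sets not containing $i$ (take $S^*=\emptyset$ if there are none), and let $T^*$ be the intersection of all $g$-tight sets containing $i$ (the family is nonempty, since $\{1,\dots,n\}$ is $g$-tight). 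Since each $u(\cdot)$ is a modular set function while $\rho$ is submodular, a routine computation shows unions and intersections of tight sets are again tight; hence $S^*$ is $f$-tight with $i\notin S^*$, and $T^*$ is $g$-tight with $i\in T^*$. The dichotomy above says every $j\in A$ lies in $S^*$ or outside $T^*$, so $A$ is disjoint from $V:=T^*\setminus S^*$. As $i\in V$ and $\deg_k f\ge\deg_k g$ for every $k\in V$, we get $f(V)>g(V)$.

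To finish, I would combine submodularity of $\rho$ with $f(\cdot)\le\rho(\cdot)$, $g(\cdot)\le\rho(\cdot)$ and the tightness of $S^*,T^*$:
\[
f(S^*)+g(T^*)=\rho(S^*)+\rho(T^*)\ \ge\ \rho(S^*\cup T^*)+\rho(S^*\cap T^*)\ \ge\ f(S^*\cup T^*)+g(S^*\cap T^*).
\]
Since $f(\cdot)$ and $g(\cdot)$ are modular, $f(S^*\cup T^*)-f(S^*)=f(V)$ and $g(T^*)-g(S^*\cap T^*)=g(V)$, so the display rearranges to $f(V)\le g(V)$, contradicting $f(V)>g(V)$. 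The genuinely nontrivial part is setting up the rank-function dictionary and pinning down $S^*$ and $T^*$ as the right tight sets to combine; everything after that is formal, so I expect that identification to be the main obstacle. (One can also argue entirely within the setting of the Definition, using the exchange property directly, but then the heart of the matter becomes a discrete-polymatroid analogue of the circuit–cocircuit orthogonality underlying the classical matroid proof of symmetric exchange — essentially the same bookkeeping repackaged.)
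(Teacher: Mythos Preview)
The paper does not give its own proof of this theorem: immediately after the statement it says ``For a proof of this theorem, as well as a more extensive background on discrete polymatroids, see \cite{HH}.'' So there is no in-paper argument to compare against.

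Your argument is correct. You invoke the equivalence between the exchange-property definition and the submodular rank-function description of a discrete polymatroid (this is the one genuinely external input, and it is indeed standard), and then run the classical tight-set contradiction: the lattice property of $u$-tight sets lets you collapse the obstructions for each $j\in A$ into a single $f$-tight $S^*$ with $i\notin S^*$ and a single $g$-tight $T^*$ with $i\in T^*$, and the submodularity inequality combined with modularity of $f(\cdot)$ and $g(\cdot)$ forces $f(V)\le g(V)$ on $V=T^*\setminus S^*$, contradicting $f(V)>g(V)$. All the bookkeeping checks (in particular $\deg_i f\ge 1$, $\deg_j g\ge 1$ for $j\in A$, $\emptyset$ tight, $[n]$ tight, and the modular identities $f(S^*\cup T^*)-f(S^*)=f(V)$, $g(T^*)-g(S^*\cap T^*)=g(V)$) go through as you wrote. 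This is essentially the standard proof one finds in the polymatroid literature; the reference \cite{HH} proceeds along the same lines.
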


For a proof of this theorem, as well as a more extensive background on discrete polymatroids, see \cite{HH}.

To a polymatroidal basis $B$ we can associate the toric ring $\K[B]$. This is the subring of $ \K[X]$ generated by the monomials in $B$. 
In the language of discrete polymatroids $\K[B]$ is also known as the base ring of $B$, or in the language of monomial ideals as the fiber ring of the ideal generated by $B$. If $B=\{f_1, \ldots, f_m\}$, let $Y=\{y_1, \ldots, y_m\}$ and define a homomorphism $\varphi: \K[Y] \to \K[X]$ by $\varphi(y_i)=f_i$. Then $\J_B := \ker \varphi$ is called the \emph{toric ideal of $B$}, and $\K[B]\cong \K[Y]/\J_B$. It is a well known fact that such an ideal is generated by binomials, more precisely by differences of monomials. If we take two variables $y_r, y_s \in Y$, Theorem \ref{thm:sym_exchange} induces binomials of degree two in $\J_B$ in the following way. Say $\varphi(y_r)=f$ and $\varphi(y_s)=g$. Assuming $f \ne g$ Theorem \ref{thm:sym_exchange} tells us that there are $i$ and $j$ such that $\deg_if>\deg_ig$, $\deg_jf<\deg_jg$, and $\frac{x_j}{x_i}f, \frac{x_i}{x_j}g \in B$. Then there are some $y_t, y_u \in Y$ so that $\varphi(y_t) = \frac{x_j}{x_i}f$ and $\varphi(y_u)=\frac{x_i}{x_j}g$ and $y_ry_s-y_ty_u \in \J_B$. The elements in $\J_B$ of this type are called \emph{symmetric exchange relations}. To be precise, $y_ry_s-y_ty_u$ is a symmetric exchange relation if there are $i$ and $j$ such that 
\begin{align*}
&\deg_i \varphi(y_r)> \deg_i \varphi(y_s), \ \ \deg_j \varphi(y_r)< \deg_j \varphi(y_s), \\ 
&\varphi(y_t)=\frac{x_j}{x_i}\varphi(y_r), \ \mbox{ and } \varphi(y_u)=\frac{x_i}{x_j}\varphi(y_s).
\end{align*}

\begin{conj}[White 1980, Herzog-Hibi 2002]\label{conj}
The toric ideal $\J_B$ of a polymatroidal basis $B$ is generated by the symmetric exchange relations. 
\end{conj}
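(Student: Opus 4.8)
The plan is to reduce Conjecture~\ref{conj} to a combinatorial connectivity statement and then attack that statement by a potential-function induction. Since $\J_B$ is a toric (lattice) ideal it is generated by binomials $\mathbf{y}^{\mathbf u}-\mathbf{y}^{\mathbf v}$ with $\varphi(\mathbf y^{\mathbf u})=\varphi(\mathbf y^{\mathbf v})$, and we may take $\mathbf y^{\mathbf u},\mathbf y^{\mathbf v}$ coprime. Such a binomial records an ordered pair $(F,G)$ of multisets of bases, $F=\{f_1,\dots,f_d\}$ and $G=\{g_1,\dots,g_d\}$, having the same total content $\prod_k f_k=\prod_k g_k$ in $\K[X]$; equality of the standard degrees of the $f_k$ forces $|F|=|G|=d$. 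A symmetric exchange relation is exactly such a binomial with $d=2$, and the governing algebraic fact is the Markov-basis principle: $\mathbf y^{\mathbf u}-\mathbf y^{\mathbf v}$ lies in the ideal generated by the symmetric exchange relations if and only if $F$ can be turned into $G$ by a finite sequence of symmetric exchanges, each applied to a pair of members of the current multiset with the total content preserved throughout. I would first establish this reduction, after which Conjecture~\ref{conj} becomes the assertion that, for every fixed content, the \emph{exchange graph} on content-fixed multisets of $d$ bases, with edges given by single symmetric exchanges, is connected.

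To prove the connectivity I would set up a well-founded induction on the potential
\[
\Phi(F,G)=\min_{\sigma\in S_d}\ \sum_{k=1}^d \tfrac12\sum_i \bigl|\deg_i f_k-\deg_i g_{\sigma(k)}\bigr|,
\]
which vanishes exactly when $F=G$ as multisets. Fix a minimizing pairing and suppose $\Phi>0$. For any variable $x_i$ the signed discrepancies $\deg_i f_k-\deg_i g_{\sigma(k)}$ sum to zero because $F$ and $G$ share the same content, so whenever some basis is over-represented in $x_i$ there is also one that is under-represented. The ideal move is to transfer a surplus $x_i$ from an over-represented basis $f$ to an under-represented basis $f'$: this would lower $\Phi$ by $2$. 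Theorem~\ref{thm:sym_exchange} is the tool that realizes transfers, but it only applies to a pair with $\deg_i f>\deg_i f'$ and it forces a simultaneous reverse transfer of some variable $x_j$ with $\deg_j f<\deg_j f'$, producing $\tfrac{x_j}{x_i}f,\ \tfrac{x_i}{x_j}f'\in B$. So a single legal exchange can lower the $x_i$-part of $\Phi$ by $2$ while changing the $x_j$-part by up to $+2$.

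The main obstacle, and the reason Conjecture~\ref{conj} is open, is the mismatch between what the exchange mechanism permits and what progress requires. The surplus basis $f$ and the deficient basis $f'$ that we would like to pair need not satisfy the legality inequality $\deg_i f>\deg_i f'$, so the transfer may have to be routed through intermediate bases; and even a legal exchange drags along an $x_j$ whose destination we cannot dictate, so the natural potential can stall at a value it never strictly decreases. I see three routes around this. First, replace the single exchange by a provably net-decreasing compound move: chain several exchanges so that the stray $x_j$ is relayed to a basis that needs it, and bound the length of such chains by a minimal-counterexample or shortest-path argument in the exchange graph. Second, look for a term order under which the leading forms of the symmetric exchange relations generate $\init(\J_B)$; a quadratic Gröbner basis would prove the conjecture outright, and this is exactly the mechanism behind the strong-exchange and product cases, where the extra exchange freedom always supplies an $x_j$ that the deficient basis genuinely wants. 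Third, following an ``up to saturation'' strategy, one would first show that the symmetric exchange relations and $\J_B$ agree away from the coordinate hyperplanes and then remove the saturation by a degree bound within each content stratum. In every route the decisive, and presently unresolved, step is controlling the compensating variable $x_j$ so that progress toward $G$ is never undone --- precisely the control that the hypotheses of this paper are designed to furnish.
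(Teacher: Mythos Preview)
The statement you are addressing is a \emph{conjecture}, and the paper does not prove it. The paper explicitly says that Conjecture~\ref{conj} remains open, and its contribution is Theorem~\ref{thm:main}, which settles only the special case where $B$ is a product of bases with the strong exchange property. So there is no ``paper's own proof'' of this statement to compare against.

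Your proposal is also not a proof, and you say as much: after the (standard and correct) reduction to connectivity of the exchange graph, you set up a potential $\Phi$, then correctly identify the obstruction --- a legal symmetric exchange may not strictly decrease $\Phi$ because the compensating variable $x_j$ is not under your control --- and finally list three possible routes (compound moves, a quadratic Gr\"obner basis, saturation) without carrying any of them out. Each of these routes is, as far as anyone knows, genuinely hard in full generality; in particular, the existence of a quadratic Gr\"obner basis is itself open even for transversal polymatroids (see the questions in Section~5 of the paper). So the gap is not a missing lemma but the entire core of a difficult open problem.

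For contrast, the paper's proof of the \emph{special case} does not use a potential-function argument at all. It encodes monomials in $\K[Y]$ by $d\times s$ matrices whose columns live in the factors $B_1,\dots,B_s$, and proceeds by induction on the number $s$ of factors. The SEP hypothesis is exactly what makes the ``stray $x_j$'' controllable: Lemma~\ref{lem:shortcut} lets one reroute exchanges within a single SEP factor, and Lemma~\ref{lem:exchange} then localizes the effect of a single $x_i/x_j$ move to at most one entry per column. Combined with the column-permutation Lemma~\ref{lem:rearrange}, this allows one to match the first column of $F$ to that of $G$ modulo $\E_B+\Ll_B$ and invoke the inductive hypothesis on the remaining columns. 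None of this machinery is available for a general polymatroidal basis, which is why the paper stops where it does.
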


This conjecture was originally posed in \cite{W} for matroids, which correspond to squarefree polymatroidal bases, and for discrete polymatroids in \cite{HH}. Since the symmetric exchange relations are quadratic it is natural to ask whether the algebra $\K[B]$ is Koszul, and if the toric ideal has a quadratic Gröbner basis. Recall that a $\K$-algebra $R$ is Koszul if $\K$ has a linear free resolution over $R$. Every Koszul algebra is defined by an ideal generated in degree two, and every algebra defined by an ideal with a quadratic Gröbner basis is Koszul.

\begin{ex}
The set $B=\{ {x}_{1}{x}_{2}{x}_{3}, {x}_{1}{x}_{3}^{2}, {x}_{2}^{2}{x}_{3}, {x}_{2}{x}_{3}^{2}, {x}_{2}{x}_{3}{x}_{4}, {x}_{3}^{2}{x}_{4}\}$ is a polymatroidal basis. The toric ideal $\J_B$ is the kernel of the homomorphism $\varphi: \K[X] \to \K[Y]$ defined by
\begin{align*}
&\varphi(y_1)={x}_{1}{x}_{2}{x}_{3}, 
\ \ \varphi(y_2)=x_1x_3^2, \ \  \varphi(y_3)=x_2^2x_3, \\
&\varphi(y_4)=x_2x_3^2, \ \ \varphi(y_5)=x_2x_3x_4, \ \  \varphi(y_6)= x_3^2x_4.
\end{align*}
A computation in Macaulay2 \cite{M2} gives that
\[
\J_B=(y_1y_4-y_2y_3, \ y_1y_6-y_2y_5, \ y_3y_6-y_4y_5).
\]
This generating set is a Gröbner basis w.\,r.\,t.\ the Lex order. We also check that the generators are symmetric exchange relations. The fact that $\deg_1(x_1x_3^2)>\deg_1(x_2^2x_3)$, $\deg_2(x_1x_3^2)<\deg_2(x_2^2x_3)$, and
\[
\frac{x_2}{x_1}x_1x_3^3=x_2x_3^2, \ \ \frac{x_1}{x_2}x_2^2x_3=x_1x_2x_3 \ \  \mbox{both belongs to } B
\] 
gives rise to the symmetric exchange relation $y_2y_3-y_1y_4$. Moreover, $\deg_1(x_1x_2x_3)>\deg_1(x_3^2x_4)$, $\deg_4(x_1x_2x_3)<\deg_4(x_3^2x_4)$, and
\[
\frac{x_4}{x_1}x_1x_2x_3=x_2x_3x_4, \ \ \frac{x_1}{x_4}x_3^2x_4=x_1x_3^2 \ \ \mbox{both belongs to } B
\] 
gives the symmetric exchange relation $y_1y_6-y_2y_5$. Last, $y_3y_6-y_4y_5$ is a symmetric exchange relation which comes from the fact that $\deg_2(x_2^2x_3)>\deg_2(x_3^2x_4)$, $\deg_3(x_2^2x_3)<\deg_3(x_3^2x_4)$, and
\[
\frac{x_3}{x_2}x_2^2x_3=x_2x_3^2, \ \ \frac{x_2}{x_3}x_3^2x_4=x_2x_3x_4  \ \ \mbox{both belongs to } B. \qedhere
\]
\end{ex}

We finish this section with a lemma regarding the symmetric exchange relations, which we will need later.

\begin{lem}\label{lem:nonprop_exchange_rel}
Let $B$ be a polymatroidal basis and let $\E_B$ be the ideal generated by the symmetric exchange relations. Suppose we have $y_r, y_s, y_t, y_u$ such that $\varphi(y_t)=\frac{x_j}{x_i}\varphi(y_r)$ and $\varphi(y_u)=\frac{x_i}{x_j}\varphi(y_s)$ for some $i$ and $j$. Then $y_ry_s-y_ty_u \in \E_B$.
\end{lem}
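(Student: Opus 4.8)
The plan is to induct on the measure $d:=\sum_{l}\bigl|\deg_l\varphi(y_r)-\deg_l\varphi(y_s)\bigr|$, peeling off a genuine symmetric exchange relation at each step. Write $f=\varphi(y_r)$, $g=\varphi(y_s)$, $a=\deg_if-\deg_ig$ and $b=\deg_jf-\deg_jg$. First dispose of the case $i=j$: then $\varphi(y_t)=f=\varphi(y_r)$ and $\varphi(y_u)=g=\varphi(y_s)$, so $y_t=y_r$ and $y_u=y_s$ (the elements of $B$ being distinct) and the binomial vanishes. So assume $i\ne j$; note $\deg_if\ge1$ and $\deg_jg\ge1$, since $\frac{x_j}{x_i}f$ and $\frac{x_i}{x_j}g$ are honest monomials (indeed elements of $B$).

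Next I would split according to the signs of $a$ and $b$. If $a\ge1$ and $b\le-1$, then $y_ry_s-y_ty_u$ is literally a symmetric exchange relation and we are done. If $a\le1$ and $b\ge-1$, then the reversed binomial $y_ty_u-y_ry_s$ turns out to be a symmetric exchange relation for the pair $\varphi(y_t),\varphi(y_u)$ with the roles of $i$ and $j$ exchanged: the inequalities to verify become $\deg_jf+1>\deg_jg-1$ and $\deg_if-1<\deg_ig+1$, i.e. $b>-2$ and $a<2$, which hold; hence $y_ry_s-y_ty_u\in\E_B$. A short check shows these two cases leave uncovered only the regions $\{a\le0,\ b\le-2\}$ and $\{a\ge2,\ b\ge0\}$, and replacing $(y_r,y_s,y_t,y_u,i,j)$ by $(y_s,y_r,y_u,y_t,j,i)$ merely negates the binomial, keeps $d$ fixed, and sends $(a,b)$ to $(-b,-a)$; so we may assume $a\le0$ and $b\le-2$.

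In this remaining situation $\deg_jg>\deg_jf$, so Theorem~\ref{thm:sym_exchange} applied to the pair $(g,f)$ and the index $j$ produces an index $k$ with $\deg_kf>\deg_kg$, $\frac{x_k}{x_j}g\in B$ and $\frac{x_j}{x_k}f\in B$; moreover $k\notin\{i,j\}$ because $\deg_kf>\deg_kg$ while $\deg_jf<\deg_jg$ and $\deg_if\le\deg_ig$. Set $f''=\frac{x_j}{x_k}f$ and $g''=\frac{x_k}{x_j}g$, and let $y_v,y_w$ be the variables with $\varphi(y_v)=f''$, $\varphi(y_w)=g''$. Then $y_ry_s-y_vy_w$ is a symmetric exchange relation (for the indices $k$ and $j$), and $y_vy_w-y_ty_u$ is again a binomial of the form in the statement, since $\varphi(y_t)=\frac{x_k}{x_i}\varphi(y_v)$ and $\varphi(y_u)=\frac{x_i}{x_k}\varphi(y_w)$. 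Its measure has strictly dropped: only the $j$- and $k$-coordinates change, the $j$-contribution $|b|$ decreases by $2$, and the $k$-contribution $|\deg_kf-\deg_kg|$ does not increase, so $d(f'',g'')\le d-2$. By the induction hypothesis $y_vy_w-y_ty_u\in\E_B$, whence $y_ry_s-y_ty_u=(y_ry_s-y_vy_w)+(y_vy_w-y_ty_u)\in\E_B$. The base case $d=0$ forces $f=g$, hence $a=b=0$, which is covered by the second case above.

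The main obstacle is the bookkeeping of the case split, and in particular choosing the exchange move in the last case so that it is simultaneously a bona fide symmetric exchange relation — this is why we invoke Theorem~\ref{thm:sym_exchange} on the pair $(g,f)$ and the index $j$, rather than trying to prescribe the index it outputs — and leaves a residual binomial strictly closer to the diagonal $\{f=g\}$. Verifying the inequality $d(f'',g'')<d$ is the technical crux, and this is precisely where the hypothesis $b\le-2$ (rather than merely $b<0$) is used; consequently it matters that the two preliminary cases genuinely absorb every pair $(a,b)$ with $b=-1$.
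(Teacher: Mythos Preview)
Your proof is correct and follows essentially the same route as the paper: a case split on the signs of $a=\deg_i f-\deg_i g$ and $b=\deg_j f-\deg_j g$, with the residual case handled by one application of the symmetric exchange property to produce intermediate variables $y_v,y_w$. The paper organizes the split into four cases and shows directly, with no induction, that in each case $y_ry_s-y_ty_u$ is a difference of at most two symmetric exchange relations.

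Your induction on $d$ is in fact unnecessary: in the remaining case $a\le 0$, $b\le -2$, the residual instance $(y_v,y_w,y_t,y_u)$ with indices $(i,k)$ has new parameters $a'=\deg_i f''-\deg_i g''=a\le 1$ (since $i\notin\{j,k\}$) and $b'=\deg_k f''-\deg_k g''=c-2\ge -1$ (since $c=\deg_kf-\deg_kg\ge 1$), so it always lands in your Case~2 and is dispatched immediately. Hence the recursion halts after a single step, and the argument collapses to the paper's direct one. Your Case~2 (covering all of $a\le 1$, $b\ge -1$) is a mild sharpening of the paper's corresponding case (which only treats $a\le 0$, $b\ge 0$); this is exactly what lets you shrink the residual region to $b\le -2$. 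One small slip: the swap $(y_r,y_s,y_t,y_u,i,j)\mapsto(y_s,y_r,y_u,y_t,j,i)$ leaves the binomial unchanged rather than negating it, but this is harmless.
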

\begin{proof}
If $\deg_i \varphi(y_r)> \deg_i \varphi(y_s)$ and $\deg_j \varphi(y_r)< \deg_j \varphi(y_s)$ then $y_ry_s-y_ty_u$ is a symmetric exchange relation. 

Suppose $\deg_i \varphi(y_r)> \deg_i \varphi(y_s)$ and $\deg_j \varphi(y_r)\ge \deg_j \varphi(y_s)$. Since $B$ is polymatroidal there is some index $k$ such that $ \deg_k \varphi(y_r)< \deg_k \varphi(y_s)$ and $\frac{x_k}{x_i}\varphi(y_r)$, $\frac{x_i}{x_k}\varphi(y_s) \in B$. Then there are some $y_v, y_w$ such that $\varphi(y_v)=\frac{x_k}{x_i}\varphi(y_r)$ and $\varphi(y_w)=\frac{x_i}{x_k}\varphi(y_s)$. It follows that $y_ry_s-y_vy_w\in \E_B$. We also have $y_ty_u-y_vy_w \in \E_B$ because
\[
\deg_j \varphi(y_t)=\deg_j \varphi(y_r)+1> \deg_j \varphi(y_s)-1=\deg_j \varphi(y_u),
\]
\[
\deg_k \varphi(y_t)=\deg_k \varphi(y_r)<\deg_k \varphi(y_s)=\deg_k \varphi(y_u),
\]
\[
\varphi(y_v)=\frac{x_k}{x_i}\varphi(y_r)=\frac{x_k}{x_i}\frac{x_i}{x_j}\varphi(y_t)=\frac{x_k}{x_j}\varphi(y_t) \ \mbox{and similarily} \ \varphi(y_w)=\frac{x_j}{x_k}\varphi(y_u)
\]
Then $y_ry_s-y_ty_u=(y_ry_s-y_vy_w)-(y_ty_u-y_vy_w) \in \E_B$.

Suppose instead $\deg_i \varphi(y_r)\le \deg_i \varphi(y_s)$ and $\deg_j \varphi(y_r)< \deg_j \varphi(y_s)$. Since $B$ is polymatroidal, $\deg_j \varphi(y_s)>\deg_j \varphi(y_r)$ implies that there is some index $k$ such that $\deg_k \varphi(y_s)<\deg_k \varphi(y_r)$ and $\frac{x_k}{x_j}\varphi(y_s), \frac{x_j}{x_k}\varphi(y_s) \in B$. Then there are $y_v$ and $y_w$ such that $\varphi(y_v)=\frac{x_k}{x_j}\varphi(y_s)$ and $\varphi(y_w)=\frac{x_j}{x_k}\varphi(y_r)$, and we have $y_sy_r-y_vy_w \in \E_B$. In a similar way as in the previous case one can verify that $\deg_i \varphi(y_u)> \deg_i \varphi(y_t)$, $\deg_k \varphi(y_u)< \deg_k \varphi(y_t)$, $\varphi(y_v)=\frac{x_k}{x_i}\varphi(y_u)$ and $\varphi(y_w)=\frac{x_i}{x_k}\varphi(y_t)$, so that $y_uy_t-y_vy_w \in \E_B$. 
Then $y_ry_s-y_ty_u=(y_ry_s-y_vy_w)-(y_ty_u-y_vy_w) \in \E_B$.

Last we consider the case $\deg_i \varphi(y_r)\le \deg_i \varphi(y_s)$ and $\deg_j \varphi(y_r) \ge \deg_j \varphi(y_s)$. In this case  the binomial $y_ty_u-y_ry_s$ is a symmetric exchange relation, as $\varphi(y_r)= \frac{x_i}{x_j}\varphi(y_t)$, $\varphi(y_s)=\frac{x_j}{x_i}\varphi(y_u)$, $\deg_i \varphi(y_t)< \deg_i \varphi(y_u)$, and $\deg_j \varphi(y_t) > \deg_j \varphi(y_u)$. We have now seen that $y_ry_s-y_ty_u \in \E_B$ in all possible cases.\end{proof}

\section{The Strong Exchange Property}
It was proved in \cite{HH} that Conjecture \ref{conj} holds for discrete polymatroids with the so called \emph{strong exchange property}.

\begin{defi}
A set $B$ of monomials in $\K[X]$ is called a basis with the \emph{strong exchange property (SEP)} if all the monomials in $B$ are of the same degree and if $f,g \in B$ with $\deg_if>\deg_ig$ and $\deg_jf<\deg_jg$, for some $i$ and $j$, then $\frac{x_j}{x_i}f \in B$. 
\end{defi}
Notice that we will also have $\frac{x_i}{x_j}g \in B$ in the above definition. 

\begin{thm}[Herzog-Hibi 2002]
If $B$ is a basis with the SEP then the toric ideal $\J_B$ is generated by the symmetric exchange relations, and has a quadratic Gröbner basis.  
\end{thm}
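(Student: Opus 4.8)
The plan is to prove both conclusions at once by producing a quadratic Gröbner basis of $\J_B$ all of whose elements already lie in $\E_B$, the ideal generated by the symmetric exchange relations. I would build it from \emph{sorting}, in the sense of Sturmfels. Fix the order $x_1<\cdots<x_n$ on the variables and write every monomial of degree $d$ as a weakly increasing word in the $x_i$. Given $f,g\in B$ (necessarily of a common degree $d$), merge the sorted words of $f$ and $g$ into one weakly increasing word of length $2d$, and let $u$ (resp.\ $v$) be the monomial formed from its odd-indexed (resp.\ even-indexed) letters; then $uv=fg$, the pair $\operatorname{sort}(fg):=(u,v)$ depends only on the product $fg$, and $|\deg_i u-\deg_i v|\le 1$ for every $i$. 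Call $(f,g)$ \emph{sorted} if $\operatorname{sort}(fg)=(f,g)$, and set $\mathcal G:=\{\,y_fy_g-y_uy_v : f,g\in B,\ (u,v)=\operatorname{sort}(fg),\ \{f,g\}\neq\{u,v\}\,\}$.

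First, I would show that a basis $B$ with the SEP is \emph{sortable}: $\operatorname{sort}(fg)\in B\times B$ for all $f,g\in B$. The argument is a single-exchange descent toward $(u,v):=\operatorname{sort}(fg)$. If $f\neq u$, then since $\sum_k\deg_k f=d=\sum_k\deg_k u$ there is an index $i$ with $\deg_i f>\deg_i u$ and an index $j$ with $\deg_j f<\deg_j u$; from $\deg_i u+\deg_i v=\deg_i f+\deg_i g$ and $|\deg_i u-\deg_i v|\le 1$ one deduces $\deg_i f>\deg_i g$, and symmetrically $\deg_j f<\deg_j g$. Hence, by the SEP and the remark after its definition, $f':=\frac{x_j}{x_i}f\in B$ and $g':=\frac{x_i}{x_j}g\in B$; since $f'g'=fg$ we still have $\operatorname{sort}(f'g')=(u,v)$, while the non-negative integer $\sum_k|\deg_k f-\deg_k u|$ has decreased by $2$. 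Iterating, it reaches $0$, so $(f,g)$ becomes $(u,v)$, and in particular $u,v\in B$.

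Next, with sortability in hand, $\mathcal G$ is a Gröbner basis of $\J_B$ with respect to the sorting term order, for which $\init(y_fy_g-y_uy_v)$ is the ``unsorted'' monomial $y_fy_g$; this is Sturmfels' theorem on sortable sets. In particular $\J_B$ is generated by the quadrics in $\mathcal G$ and has a quadratic Gröbner basis. For a self-contained version one uses that the monomials of $\K[Y]$ divisible by no leading term of $\mathcal G$ always span $\K[B]\cong\K[Y]/\J_B$, while sortability forces these ``sorted'' monomials to map injectively to monomials of $\K[X]$; they are then a $\K$-basis, which makes $\mathcal G$ a Gröbner basis by the usual Hilbert-function count.

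Finally, each element of $\mathcal G$ lies in $\E_B$: the descent above realizes $(u,v)$ as the end of a chain $(f,g)=(f_0,g_0),(f_1,g_1),\dots,(f_N,g_N)=(u,v)$ in $B\times B$ with $\varphi(y_{f_{l+1}})=\frac{x_{j_l}}{x_{i_l}}\varphi(y_{f_l})$ and $\varphi(y_{g_{l+1}})=\frac{x_{i_l}}{x_{j_l}}\varphi(y_{g_l})$, so Lemma \ref{lem:nonprop_exchange_rel} puts each $y_{f_l}y_{g_l}-y_{f_{l+1}}y_{g_{l+1}}$ in $\E_B$, and telescoping gives $y_fy_g-y_uy_v\in\E_B$. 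Hence $\J_B\subseteq\E_B\subseteq\J_B$: the symmetric exchange relations generate $\J_B$, and $\mathcal G$ is the promised quadratic Gröbner basis. I expect the main obstacle to be the Gröbner-basis claim above — the combinatorial heart of Sturmfels' sortable-sets theory, i.e.\ the injectivity of sorted monomials under $\varphi$ — whereas the SEP enters only through the exchange argument of the second paragraph, and it is genuinely the \emph{strong} exchange property that is needed there, since the symmetric exchanges of Theorem \ref{thm:sym_exchange} need not include the specific exchange prescribed by $\operatorname{sort}(fg)$.
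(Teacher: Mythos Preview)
The paper does not supply its own proof of this theorem; it is quoted from \cite{HH} and used as background. Your argument is correct and is essentially the standard proof: bases with the SEP are sortable in Sturmfels' sense, the sorting relations form a quadratic Gr\"obner basis for a sorting order, and each sorting relation decomposes as a telescoping sum of single-variable exchanges that lie in $\E_B$ by Lemma~\ref{lem:nonprop_exchange_rel}. The only places that deserve a word of caution are exactly the ones you flag: the deduction $\deg_i f>\deg_i g$ from $\deg_i f>\deg_i u$ uses the parity observation that $\deg_i u+\deg_i v=\deg_i f+\deg_i g$ together with $|\deg_i u-\deg_i v|\le 1$ forces $\deg_i u=\deg_i v$ whenever $\deg_i f=\deg_i g$, and the Gr\"obner claim rests on the fact that a sorted $d$-tuple is uniquely recoverable from its product in $\K[X]$, so the sorted monomials in $\K[Y]$ inject under $\varphi$. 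Both points are standard and your sketch handles them adequately.
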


We say that a monomial ideal has the SEP if it is generated by a basis with the SEP. Every basis with the SEP is polymatroidal, but the converse does not hold. 
It was proved in \cite{HHV} that bases with the SEP are isomorphic to bases of Veronese type. In Theorem \ref{thm:veronese_type} we make this characterization a bit more explicit. For a basis $B$ with the SEP, let $\max_B(i) = \max\{ \deg_i(f) \ | \ f \in B\}$ and  $\min_B(i) = \min\{ \deg_i(f) \ | \ f \in B\}$.

\begin{thm}\label{thm:veronese_type}
Let $B$ be a basis with the SEP of monomials of degree $d$. Then
\[
B=\left\{ x_1^{\alpha_1} \cdots x_n^{\alpha_n} \ \Big| \ \sum_{i=1}^n\alpha_i=d, \ {\min}_B(i) \le \alpha_i \le {\max}_B(i) \right\}
\]
\end{thm}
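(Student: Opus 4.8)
The plan is to prove the two inclusions separately, where the nontrivial direction is "$\supseteq$". Write $m_i = \min_B(i)$ and $M_i = \max_B(i)$, and call the right-hand set $V$. For the inclusion $B \subseteq V$: every $f = x_1^{\alpha_1}\cdots x_n^{\alpha_n} \in B$ has degree $d$ by assumption, and $m_i \le \alpha_i \le M_i$ holds by the very definition of $m_i$ and $M_i$; so $B \subseteq V$ is immediate. The content is in showing $V \subseteq B$.

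For $V \subseteq B$, I would fix a target monomial $h = x_1^{\alpha_1}\cdots x_n^{\alpha_n} \in V$ and produce it from some element of $B$ by a sequence of moves $f \mapsto \tfrac{x_j}{x_i}f$ that stay inside $B$, using the SEP. The natural measure of progress is $\delta(f) := \sum_{i=1}^n |\deg_i f - \alpha_i|$, and I would argue by induction on $\delta(f)$ over $f \in B$, aiming to show that the minimum of $\delta$ over $B$ is $0$. Suppose $f \in B$ realizes the minimum and $\delta(f) > 0$. Since $\deg f = \deg h = d$, the total excess equals the total deficit, so there exist indices $i,j$ with $\deg_i f > \alpha_i \ge m_i$ and $\deg_j f < \alpha_j \le M_j$. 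Because $\deg_i f > m_i$ and $\deg_j f < M_j$, there are witnesses $f', f'' \in B$ with $\deg_i f' < \deg_i f$ and $\deg_j f'' > \deg_j f$; I want to combine these into a single step decreasing $\delta$. The cleanest route: first use $\deg_j f'' > \deg_j f$ together with $\deg_i f$ being large to find, via the symmetric exchange property (Theorem \ref{thm:sym_exchange}) applied to the pair $f, f''$, an index $i'$ with $\deg_{i'} f > \deg_{i'} f''$ and $\tfrac{x_j}{x_{i'}}f \in B$. If one can choose $i' = i$ (or more generally $i'$ with $\deg_{i'} f > \alpha_{i'}$), then $g := \tfrac{x_j}{x_{i'}}f \in B$ satisfies $\delta(g) = \delta(f) - 2$, contradicting minimality.

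The main obstacle is exactly this matching: the exchange partner $i'$ produced by the SEP/symmetric exchange property need not be an index where $f$ currently has an excess over $h$. To handle this I would argue more carefully. Consider the sets $P = \{i : \deg_i f > \alpha_i\}$ (excess coordinates) and $Q = \{j : \deg_j f < \alpha_j\}$ (deficit coordinates), both nonempty. For each $j \in Q$ and each $i \in P$ we have $\deg_i f > \deg_i h \ge m_i$ and, since SEP says that $\deg_i f > \deg_i g$ and $\deg_j f < \deg_j g$ for \emph{some} $g \in B$ already forces $\tfrac{x_j}{x_i} f \in B$, it suffices to exhibit one $g \in B$ with $\deg_i g < \deg_i f$ and $\deg_j g > \deg_j f$ for some pair $(i,j) \in P \times Q$. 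Pick any $j \in Q$; since $\deg_j f < M_j$ there is $g \in B$ with $\deg_j g > \deg_j f$. Now $g$ and $f$ have the same total degree and differ in coordinate $j$, so $g$ has strictly less mass than $f$ somewhere; chasing which coordinates $g$ dropped on, and using that coordinates outside $P$ already satisfy $\deg_\bullet f \le \alpha_\bullet$, I would show one can arrange (possibly after replacing $g$ by a better witness via another SEP step, or by choosing $j$ and the witness greedily) that $g$ drops on some $i \in P$. Then SEP gives $\tfrac{x_j}{x_i} f \in B$ with $\delta$ decreased by $2$, the desired contradiction. Once $\min_{f \in B} \delta(f) = 0$ is established, the minimizer equals $h$, so $h \in B$ and $V \subseteq B$, completing the proof.
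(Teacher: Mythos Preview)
Your overall strategy---fix a target $h \in V$, pick $f \in B$, and drive down $\delta(f)=\sum_i|\deg_i f-\alpha_i|$ via SEP moves---is exactly the shape of the paper's argument. The gap is precisely where you flag ``the main obstacle'' and then wave at it: you pick $j\in Q$, take $g\in B$ with $\deg_j g>\deg_j f$, observe that $g$ must drop somewhere, and then assert that ``possibly after replacing $g$ by a better witness via another SEP step, or by choosing $j$ and the witness greedily'' one can arrange the drop to occur at some $i\in P$. You never actually prove this. And it is not automatic: if $g$ drops only at coordinates $k\notin P$, then $\deg_k f\le\alpha_k$, and the move $f\mapsto \tfrac{x_j}{x_k}f$ leaves $\delta$ unchanged (you gain $1$ at $j$ and lose $1$ at $k$), so a naive chase can cycle without progress.

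The paper closes this gap with a short preliminary claim that you should adopt: for \emph{every} pair $i,j$ there exists a single $g\in B$ with $\deg_i g=\min_B(i)$ and $\deg_j g=\max_B(j)$ simultaneously. The proof is a one-line extremal argument (take $g$ with $\deg_i g=\min_B(i)$ and $\deg_j g$ maximal among such; if $\deg_j g<\max_B(j)$, one SEP step raises $\deg_j g$ without touching coordinate $i$, contradicting maximality). Once you have this, your induction is immediate: for any $i\in P$ and $j\in Q$ the claim hands you a witness $g$ with $\deg_i g<\deg_i f$ and $\deg_j g>\deg_j f$, SEP gives $\tfrac{x_j}{x_i}f\in B$, and $\delta$ drops by $2$. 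Equivalently, the claim yields the clean consequence the paper uses directly: whenever $\deg_i f>\min_B(i)$ and $\deg_j f<\max_B(j)$, one has $\tfrac{x_j}{x_i}f\in B$---which is exactly the ``free move'' your argument was implicitly assuming.
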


\begin{proof}
We begin by proving the following claim: For any $i$ and $j$ there is a monomial $f \in B$ with $\deg_i f=\min_B(i)$ and $\deg_j f=\max_B(j)$. To prove this, let $f$ be a monomial in $B$ such that $\deg_i f= \min_B(i)$ and $\deg_j f$ is as high as possible for a monomial $f$ with $\deg_i f=\min_B(i)$. If $\deg_j f< \max_B(j)$, take $g \in B$ with $\deg_j g=\max_B(j)$. Then $\deg_j g>\deg_j f$ and there is a $k$ with $\deg_k g< \deg_k f$ and $\frac{x_j}{x_k}f \in B$. But this contradicts our choice of $f$, and we can conclude that $\deg_j f=\max_B(j)$. 

A consequence of the claim is that if $f \in B$ and $\deg_i f> \min_B(i)$ and $\deg_j f< \max_B(j)$ we have $\frac{x_j}{x_i}f \in B$. Now let $x_1^{\alpha_1} \cdots x_n^{\alpha_n}$ be a monomial of degree $d$ such that $\min_B(i) \le \alpha_i \le \max_B(i)$ for each $i$. Take any monomial $x_1^{\beta_1} \cdots x_n^{\beta_n} \in B$. If $x_1^{\beta_1} \cdots x_n^{\beta_n} \ne x_1^{\alpha_1} \cdots x_n^{\alpha_n}$ then $\beta_i>\alpha_i$ and $\beta_j<\alpha_j$ for some $i$ and $j$, say $i=1$ and $j=2$. Then $\beta_1>\min_B(1)$ and $\beta_2< \max_B(2)$, and we have $x_1^{\beta_1-1}x_2^{\beta_2+1} x_3^{\beta_3} \cdots x_n^{\beta_n} \in B$. If $x_1^{\beta_1-1}x_2^{\beta_2+1} x_3^{\beta_3} \cdots x_n^{\beta_n} \ne x_1^{\alpha_1} \cdots x_n^{\alpha_n}$ we repeat the same argument. Eventually we will get that $x_1^{\alpha_1} \cdots x_n^{\alpha_n} \in B$, which is what we wanted to prove. 
\end{proof}

We define the product of two polymatroidal bases $B_1$ and $B_2$ as \[B_1B_2=\{b_1b_2 \ | \ b_1 \in B_1, b_2 \in B_2\}.\] If we consider $B_1$ and $B_2$ as generating sets of ideals $I_1$ and $I_2$, then clearly $B_1B_2$ is a generating set for the ideal $I_1I_2$. A nice feature of polymatroidal bases, proved in \cite{CH}, is that their products are again polymatroidal. Products of bases with the SEP does not necessarily have the SEP. For example $\{x_1,x_2\}\{x_3,x_4\}=\{x_1x_3,x_1x_4,x_2x_3,x_2x_4\}$ does not have the SEP. However, powers of bases with the SEP do have the SEP.

\begin{prop}\label{prop:power}
If $B$ has the SEP, so does $B^k$ for any positive integer $k$. 
\end{prop}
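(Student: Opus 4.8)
The plan is to combine the explicit ``box'' description of bases with the SEP from Theorem~\ref{thm:veronese_type} with a rearrangement argument on product representations. Write $a_i=\min_B(i)$ and $b_i=\max_B(i)$; by Theorem~\ref{thm:veronese_type} a monomial of degree $d$ lies in $B$ precisely when $a_i\le\deg_i(\cdot)\le b_i$ for every $i$, and in particular (this is the consequence of the claim noted in the proof of that theorem) if $h\in B$ with $\deg_ih>a_i$ and $\deg_jh<b_j$, then $\frac{x_j}{x_i}h\in B$. Every element of $B^k$ is a product of $k$ monomials of $B$, hence is a monomial of degree $kd$; and if $H=h_1\cdots h_k$ with all $h_l\in B$ then $\deg_iH=\sum_l\deg_ih_l$ satisfies $ka_i\le\deg_iH\le kb_i$. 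The case $k=1$ is the hypothesis, so assume $k\ge 2$.

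To verify the SEP for $B^k$, take $F,G\in B^k$ with $\deg_iF>\deg_iG$ and $\deg_jF<\deg_jG$, and fix a representation $F=f_1\cdots f_k$ with $f_l\in B$. Since $\deg_iF>\deg_iG\ge ka_i$, some factor has $\deg_if_{l_1}>a_i$; since $\deg_jF<\deg_jG\le kb_j$, some factor has $\deg_jf_{l_2}<b_j$. If these can be chosen with $l_1=l_2$, then $\frac{x_j}{x_i}f_{l_1}\in B$ by the box description, and replacing the factor $f_{l_1}$ by $\frac{x_j}{x_i}f_{l_1}$ exhibits $\frac{x_j}{x_i}F$ as an element of $B^k$, which is exactly what the SEP demands.

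The remaining, and main, case is when no single factor has room both to decrease its $i$-th exponent and to increase its $j$-th exponent. Then every choice of $l_1\ne l_2$ as above has $\deg_if_{l_2}=a_i$ and $\deg_jf_{l_1}=b_j$, so $f_{l_1}$ strictly dominates $f_{l_2}$ in coordinates $i$ and $j$; as $f_{l_1}$ and $f_{l_2}$ have the same degree $d$, there is an index $m\notin\{i,j\}$ with $\deg_mf_{l_1}<\deg_mf_{l_2}$. Applying the SEP of $B$ to $f_{l_1},f_{l_2}$ with the coordinate pair $i,m$ gives $\frac{x_m}{x_i}f_{l_1}\in B$ and $\frac{x_i}{x_m}f_{l_2}\in B$, so replacing $f_{l_1}$ and $f_{l_2}$ by these two monomials produces a new product representation of the same monomial $F$ in which the $l_2$-th factor now has $i$-th exponent $a_i+1>a_i$ and (unchanged) $j$-th exponent $<b_j$; the box description of $B$ then lets us divide that factor by $x_i$ and multiply by $x_j$ while staying in $B$, so again $\frac{x_j}{x_i}F\in B^k$. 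The one delicate point is precisely this last step: a priori the ``room at $i$'' and the ``room at $j$'' may lie in different factors, and the whole force of the argument is that the SEP of $B$ lets us transfer them into a common factor without altering the product.
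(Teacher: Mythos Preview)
Your argument is correct. The key observation---that if no single factor of $F=f_1\cdots f_k$ has slack at both $i$ and $j$, then one application of the SEP of $B$ (swapping an $x_i$ for some auxiliary $x_m$ between two factors) concentrates both kinds of slack in a single factor---is sound, and the box description of $B$ then finishes immediately.

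Your route is genuinely different from the paper's. The paper does not verify the SEP for $B^k$ directly; instead it proves the stronger statement that $B^k$ equals the box
\[
\mathcal{M}=\Big\{x_1^{\alpha_1}\cdots x_n^{\alpha_n}\ \Big|\ \textstyle\sum_i\alpha_i=dk,\ k\min_B(i)\le\alpha_i\le k\max_B(i)\Big\},
\]
which manifestly has the SEP. The inclusion $B^k\subseteq\mathcal{M}$ is immediate; for $\mathcal{M}\subseteq B^k$ the paper writes each $\alpha_i=k\beta_i+r_i$ with $0\le r_i<k$, factors $x_1^{r_1}\cdots x_n^{r_n}$ as a product of $k$ squarefree monomials of equal degree, and checks that each resulting factor $x^\beta M_i$ lies in $B$. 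What the paper's approach buys is an explicit identification of $B^k$ (with $\min_{B^k}(i)=k\min_B(i)$ and $\max_{B^k}(i)=k\max_B(i)$), which is extra information beyond the proposition as stated. What your approach buys is a shorter, more direct verification that avoids the squarefree-splitting trick entirely; it also makes transparent \emph{why} the SEP is inherited, namely because slack in different factors can always be rearranged into one factor using the SEP of $B$ itself.
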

\begin{proof}
By Theorem \ref{thm:veronese_type} 
\[
B=\Big\{x_1^{\alpha_1} \cdots x_n^{\alpha_n} \ | \ {\min}_B(i) \le \alpha_i \le {\max}_B(i), \sum_{i=1}^n\alpha_i=d\Big\}.
\]
Let
\[
\mathcal{M}=\Big\{x_1^{\alpha_1} \cdots x_n^{\alpha_n} \ | \ k{\min}_B(i) \le \alpha_i \le k{\max}_B(i), \sum_{i=1}^n\alpha_i=dk\Big\}
\]
We can see that $\mathcal M$ is a basis with the SEP, and $B^k \subseteq \mathcal M$. We shall prove $B^k \supseteq \mathcal M$. Let $x_1^{\alpha_1} \cdots x_n^{\alpha_n} \in \mathcal M$, and write $\alpha_i=k\beta_i+r_i$ with $0 \le r_i <k$. Since $\sum_{i=1}^n \alpha_i=dk$, the number $\sum_{i=1}^nr_i$ must also be divisible by $k$. Since $r_i<k$ we can factorize $x_1^{r_1} \cdots x_n^{r_n}=M_1 \cdots M_k$ where $M_1, \ldots M_k$ are squarefree monomials all of the same degree. Then we have a factorization
\[
x_1^{\alpha_1} \cdots x_n^{\alpha_n}=\prod_{i=1}^k(x_1^{\beta_1} \cdots x_n^{\beta_n}M_i)
\]
and $x_1^{\beta_1} \cdots x_n^{\beta_n}M_i \in B$ since 
\[
{\min}_B(j) \le \left\lfloor \frac{\alpha_j}{k} \right\rfloor \le \deg_j(x_1^{\beta_1} \cdots x_n^{\beta_n}M_i) \le \left\lceil \frac{\alpha_j}{k} \right\rceil \le {\max}_B(j).
\]
We have now proved that $x_1^{\alpha_1} \cdots a_n^{\alpha_n} \in B^k$, and hence $B^k$ has the SEP. 
\end{proof}

The main results of this paper is Theorem \ref{thm:main} which describes the structure of the toric ring $\K[B]$ when $B$ is a product of bases with the SEP. In particular, we show that Conjecture \ref{conj} holds in this case. 

For two graded $\K$-algebras $R=\bigoplus_{i \ge 0}R_i$ and $S = \bigoplus_{i \ge 0}S_i$ we let $R \circ S$ denote their \emph{Segre product}, i.\,e.\ $R \circ S = \bigoplus_{i \ge 0}  R_i \otimes_{\K} S_i$.

\begin{thm}\label{thm:main}
Let $B=B_1 \cdots B_s$ where $B_1, \ldots, B_s$ are bases with the SEP. Then
\begin{enumerate}
\item\label{thm-main1} $\J_B$ is generated by symmetric exchange relations, and
\item\label{thm-main2} $\K[B] \cong \K[B_1] \circ \dots \circ \K[B_s]/L$ where $L$ is generated by binomials of degree one.   
\end{enumerate} 
\end{thm}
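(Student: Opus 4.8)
\emph{Set-up and reduction of (2).}
Write $a_{ij}=\min_{B_i}(j)$ and $b_{ij}=\max_{B_i}(j)$. By Theorem~\ref{thm:veronese_type}, $B_i$ is the set of all monomials $x^{\alpha}$ with $\sum_j\alpha_j=d_i$ and $a_{ij}\le\alpha_j\le b_{ij}$, and by Proposition~\ref{prop:power} (and its proof) the power $B_i^k$ is the analogous set with $d_i,a_{ij},b_{ij}$ all multiplied by $k$; since $(B_1\cdots B_s)^k=B_1^k\cdots B_s^k$, the monomials of $B^k$ are exactly the products $\prod_i x^{\alpha^{(i)}}$ with $x^{\alpha^{(i)}}\in B_i^k$. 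Multiplication $f_1\otimes\cdots\otimes f_s\mapsto f_1\cdots f_s$ defines a graded surjection $\psi\colon \K[B_1]\circ\cdots\circ\K[B_s]\to\K[B]$ (surjective precisely because $B=B_1\cdots B_s$ as sets); set $L=\ker\psi$. From the above, in degree $k$ the Segre product has the $\K$-basis of pure tensors $x^{\alpha^{(1)}}\otimes\cdots\otimes x^{\alpha^{(s)}}$ with $x^{\alpha^{(i)}}\in B_i^k$, while $\K[B]_k$ has basis $B^k$; hence $L_k$ is spanned by the \emph{redistribution binomials} $x^{\alpha^{(1)}}\otimes\cdots\otimes x^{\alpha^{(s)}}-x^{\beta^{(1)}}\otimes\cdots\otimes x^{\beta^{(s)}}$ with $\sum_i\alpha^{(i)}=\sum_i\beta^{(i)}$. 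Statement (2) is the claim $L=(L_1)$, i.e.\ that $L$ is generated by its degree-one part.

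\emph{Reduction to a connectivity statement.}
For an exponent vector $\gamma$ call $(\alpha^{(1)},\dots,\alpha^{(s)})$ a \emph{distribution} of $\gamma$ at level $k$ if $x^{\alpha^{(i)}}\in B_i^k$ for all $i$ and $\sum_i\alpha^{(i)}=\gamma$. An \emph{elementary move} sends $(\alpha^{(i)})_i$ to $(\alpha^{(i)}-\nu^{(i)}+\mu^{(i)})_i$ where $\nu^{(i)},\mu^{(i)}\in B_i$, $x^{\alpha^{(i)}-\nu^{(i)}}\in B_i^{k-1}$, and $\sum_i\nu^{(i)}=\sum_i\mu^{(i)}$; such a move is exactly the product, in the Segre ring, of the degree-$(k-1)$ pure tensor $\bigotimes_i x^{\alpha^{(i)}-\nu^{(i)}}$ with the element $\bigotimes_i x^{\nu^{(i)}}-\bigotimes_i x^{\mu^{(i)}}$ of $L_1$. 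Therefore, if the distributions of any fixed $\gamma$ are connected under elementary moves, then every redistribution binomial telescopes into a sum of products (degree-$(k-1)$ tensor)$\cdot$($L_1$-element), so $L_k\subseteq(\K[B_1]\circ\cdots\circ\K[B_s])_{k-1}\cdot L_1$ for all $k$, hence $L=(L_1)$, which is (2).

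\emph{The connectivity statement --- the main obstacle.}
Given two distinct distributions $(\alpha^{(i)})_i,(\beta^{(i)})_i$ of $\gamma$, choose a coordinate $j$ and factors $i_0\ne i_1$ with $\alpha^{(i_0)}_j>\beta^{(i_0)}_j$ and $\alpha^{(i_1)}_j<\beta^{(i_1)}_j$ (possible since the total exponents agree), and attempt an elementary move transferring one $j$-unit from factor $i_0$ to factor $i_1$: write $x^{\alpha^{(i_0)}}$ as a product of $k$ elements of $B_{i_0}$, pick a factor $\nu^{(i_0)}$ with $\deg_j\nu^{(i_0)}\ge1$, use Theorem~\ref{thm:sym_exchange} for $B_{i_0}$ to find $l$ with $\frac{x_l}{x_j}\nu^{(i_0)}\in B_{i_0}$, and symmetrically pick a factor $\nu^{(i_1)}$ of $x^{\alpha^{(i_1)}}$ with $\deg_l\nu^{(i_1)}\ge1$ and $\frac{x_j}{x_l}\nu^{(i_1)}\in B_{i_1}$; the move then shifts a $j$-unit one way and an $l$-unit the other. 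The delicate point is to choose $l$ so that the move does not increase a suitable distance to $(\beta^{(i)})_i$ --- so iteration terminates --- while keeping the box constraints of \emph{both} $B_{i_0}$ and $B_{i_1}$ satisfied; I expect that making this work requires an auxiliary induction (for instance on the number $s$ of factors, or on a lexicographically refined defect), and uses genuinely that each $B_i$ is of Veronese type, not merely polymatroidal. This is the step I expect to be the real work.

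\emph{Deducing (1).}
Assume (2). Let $\K[W]$ be the polynomial ring with one variable per tuple, so $\K[B_1]\circ\cdots\circ\K[B_s]=\K[W]/Q$ for a toric ideal $Q$ with $Q_1=0$. Quotienting $\K[W]$ by the ideal generated by $L_1$ identifies variables with the same product monomial and yields exactly $\K[Y]$, whence $\J_B$ is the image in $\K[Y]$ of $Q$. Classically $Q$ is generated in degree two by (A) the single-coordinate Segre relations (exchanging the $i$-th entries of two tuples) and (B) the pullbacks of the toric ideals $\J_{B_i}$, which by the Herzog--Hibi theorem are generated by symmetric exchange relations. A type-(B) generator maps in $\K[Y]$ to a binomial $y_my_{m'}-y_{\tilde m}y_{\tilde m'}$ with $\tilde m=\frac{x_q}{x_p}m$ and $\tilde m'=\frac{x_p}{x_q}m'$, hence lies in $\E_B$ by Lemma~\ref{lem:nonprop_exchange_rel} (note $B$ is polymatroidal by \cite{CH}). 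A type-(A) generator exchanges one factor $f^{(i)}_a\leftrightarrow f^{(i)}_b$ between two monomials $m,m'\in B$ while keeping $mm'$ fixed; since $B_i$ is of Veronese type one joins $f^{(i)}_a$ to $f^{(i)}_b$ by a coordinatewise monotone chain of single exchanges in $B_i$, and a short check with the box constraints shows that both the intermediate monomials and their complements in $mm'$ lie in $B$, producing a chain from $(m,m')$ to $(\tilde m,\tilde m')$ whose every step is in $\E_B$ by Lemma~\ref{lem:nonprop_exchange_rel}; telescoping, the type-(A) generator is in $\E_B$ too. Hence $\J_B\subseteq\E_B$, and as $\E_B\subseteq\J_B$ always, $\J_B=\E_B$, which is (1).
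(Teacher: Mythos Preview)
Your proposal contains a genuine gap: you explicitly leave the ``connectivity statement'' unproved, calling it ``the step I expect to be the real work.'' Since that step is precisely the content of part~(2), and you deduce~(1) from~(2), neither conclusion is established. Your sketch of an approach---transferring a single $j$-unit between two factors via the symmetric exchange property and hoping a well-chosen distance decreases---does not identify the mechanism that actually makes this terminate, and in fact the paper does not argue this way.

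The paper's strategy is rather different in architecture. It works directly in the polynomial ring $\K[Y]$ with one variable per tuple (your $\K[W]$), sets $\Ll_B$ to be the linear binomials, $\E_B$ the symmetric exchange relations, and $\Pp_B$ the binomials whose matrices differ in at most one column; then it proves $\J_B=\E_B+\Ll_B=\Pp_B+\Ll_B$ \emph{simultaneously} by induction on~$s$. The technical core is Lemma~\ref{lem:exchange}: given a monomial $F$ with $\frac{x_1}{x_2}\varphi(F)\in B^d$, one can replace $F$ modulo $\E_B\cap\Pp_B$ by an $F'$ and find $H$ with $\varphi(H)=\frac{x_1}{x_2}\varphi(F')$ such that $F'$ and $H$ differ in \emph{at most one entry per column}. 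Proving this uses the auxiliary ``shortcut'' Lemma~\ref{lem:shortcut}, which exploits the Veronese-type box description in a nonobvious way. Iterating Lemma~\ref{lem:exchange} makes the first columns of $F$ and $G$ have equal products, at which point one splices in a mixed matrix $H$ and applies the induction hypothesis twice via Lemma~\ref{lem:induction}. This is the missing idea your proposal was groping for.

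Your deduction of~(1) from~(2) is essentially sound and parallels the paper: your treatment of type-(A) Segre generators (monotone chain in a single Veronese-type factor, then Lemma~\ref{lem:nonprop_exchange_rel} at each step) is exactly the content of Lemma~\ref{lem:rearrange}, and type-(B) generators are handled by Lemma~\ref{lem:nonprop_exchange_rel} directly. So the reduction is fine; what is missing is the proof of~(2) itself.
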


The theorem is proved in Section \ref{sec:proof}. Here we should point out that Conjecture \ref{conj} is still open, since not every polymatroidal basis is the product of bases with the SEP. Consider for example the polymatroidal basis \[B=\{x_1x_2,x_1x_3,x_2x_3,x_2x_4,x_3x_4\}.\] It is not of the form in Theorem \ref{thm:veronese_type}, so it does not have the SEP. If $B$ would be a product of two bases, it would be the product of two sets of variables, but one easily checks that this is not the case. 


In the proof of Theorem \ref{thm:main} we will need the following lemma.

\begin{lem}\label{lem:shortcut}
Let $B$ be a basis with the SEP. Suppose $f,g,\frac{x_i}{x_j}f, \frac{x_k}{x_\ell}g \in B$. Then there is an index $m$ such that $\frac{x_i}{x_m}f, \frac{x_\ell}{x_m}f, \frac{x_m}{x_\ell}g \in B$.
\end{lem}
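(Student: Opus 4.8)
The plan is to argue entirely through the explicit description of $B$ given by Theorem~\ref{thm:veronese_type}. Put $d=\deg f=\deg g$, $\mu_t=\min_B(t)$, $M_t=\max_B(t)$, $a_t=\deg_t f$ and $b_t=\deg_t g$; then a degree-$d$ monomial $h$ belongs to $B$ exactly when $\mu_t\le\deg_t h\le M_t$ for every $t$. We may assume $i\ne j$ and $k\ne\ell$, so that $\frac{x_i}{x_j}f\in B$ yields $a_i<M_i$ and $\frac{x_k}{x_\ell}g\in B$ yields $b_\ell\ge\mu_\ell+1$. Unwinding the three desired memberships through this criterion, an index $m$ does the job as soon as $a_\ell<M_\ell$ (for $\frac{x_\ell}{x_m}f\in B$), $a_m>\mu_m$ (for $\frac{x_i}{x_m}f,\frac{x_\ell}{x_m}f\in B$), and $b_m<M_m$ (for $\frac{x_m}{x_\ell}g\in B$); and if $m=i$ or $m=\ell$, then one of the three ratios is simply $f$ or $g$, so no condition is needed for it.

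I would first dispose of the case $a_\ell=M_\ell$ by taking $m=\ell$: then $\frac{x_\ell}{x_m}f=f$ and $\frac{x_m}{x_\ell}g=g$ lie in $B$ trivially, while $\frac{x_i}{x_\ell}f\in B$ because $a_i+1\le M_i$ and $a_\ell-1=M_\ell-1\ge\mu_\ell$, the latter since $\mu_\ell\le b_\ell-1\le M_\ell-1$; note $i\ne\ell$ here, as $a_i<M_i=M_\ell=a_\ell$.

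The heart of the proof is the case $a_\ell<M_\ell$, where I must produce an $m$ with $a_m>\mu_m$ and $b_m<M_m$; equivalently, with $P=\{t:a_t=\mu_t\}$ and $Q=\{t:b_t=M_t\}$, I must show $P\cup Q\ne[n]$. Suppose instead $P\cup Q=[n]$. From $\sum_t a_t=\sum_t b_t=d$ one gets $\sum_{t\notin P}(a_t-\mu_t)=d-\sum_t\mu_t$ and $\sum_{t\notin Q}(M_t-b_t)=\sum_t M_t-d$, which add up to $\sum_t(M_t-\mu_t)$. Bounding each summand by $M_t-\mu_t$ (via $a_t\le M_t$ and $b_t\ge\mu_t$) and observing that $[n]\setminus P$ and $[n]\setminus Q$ are disjoint with union $[n]\setminus(P\cap Q)$, this forces every inequality to be an equality; hence $M_t=\mu_t$ for all $t\in P\cap Q$, $a_t=M_t$ for all $t\notin P$, and $b_t=\mu_t$ for all $t\notin Q$. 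Now apply this to $\ell$: $a_\ell<M_\ell$ forces $\ell\in P$, and $b_\ell>\mu_\ell$ forces $\ell\in Q$, so $\ell\in P\cap Q$ and $\mu_\ell=M_\ell$, contradicting $\mu_\ell\le a_\ell<M_\ell$. Hence the required $m$ exists, and substituting it back into the Veronese criterion gives $\frac{x_i}{x_m}f,\frac{x_\ell}{x_m}f,\frac{x_m}{x_\ell}g\in B$.

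I expect the double-counting step --- proving $P\cup Q\ne[n]$ --- to be the only genuine obstacle: it is the single point where the global degree constraint $\sum_t a_t=\sum_t b_t=d$ has to be balanced against the pointwise bounds $\mu_t\le\alpha_t\le M_t$, and one must line up the bookkeeping with $P$, $Q$ and $P\cap Q$ so that the chain of inequalities collapses exactly. Everything else is a routine translation through Theorem~\ref{thm:veronese_type}.
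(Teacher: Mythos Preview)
Your argument is correct, but it takes a different route from the paper's. The paper's proof is shorter and uses the SEP directly rather than the Veronese description: it first observes that if $\frac{x_i}{x_\ell}f\in B$ one may simply take $m=\ell$; otherwise, since $\deg_i f<\max_B(i)$, the failure forces $\deg_\ell f=\min_B(\ell)$, and then $\deg_\ell g>\deg_\ell f$ triggers a single exchange to produce an $m$ with $\deg_m g<\deg_m f$ and $\frac{x_m}{x_\ell}g,\ \frac{x_\ell}{x_m}f\in B$, after which $\frac{x_i}{x_m}f\in B$ is immediate from $\deg_m f>\min_B(m)$ and $\deg_i f<\max_B(i)$. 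So the paper splits on whether $a_\ell>\mu_\ell$, whereas you split on whether $a_\ell=M_\ell$, and in your main case you replace the exchange step by a global double-counting argument showing $P\cup Q\neq[n]$. Your approach has the virtue of being entirely ``static'' (only the box description from Theorem~\ref{thm:veronese_type} is used, never an exchange), at the cost of the extra bookkeeping with $P$, $Q$, and $P\cap Q$; the paper's approach is a two-line application of the exchange property and avoids the counting altogether.
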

\begin{proof}
If $\frac{x_i}{x_\ell}f \in B$ we are done, as we can choose $m=\ell$. Suppose $\frac{x_i}{x_\ell}f \notin B$. Since $\frac{x_i}{x_j}f \in B$ we have $\deg_if<\max_B(i)$. Then $\frac{x_i}{x_\ell}f \notin B$ implies $\deg_\ell f=\min_B(\ell)$. As $\frac{x_k}{x_\ell}g \in B$ we have $\deg_\ell g>\min(\ell)=\deg_\ell f$. Then there is an $m$ such that $\deg_mg<\deg_mf$ and $\frac{x_m}{x_\ell}g,\frac{x_\ell}{x_m}f \in B$. Also, $\deg_mf>\min_B(m)$ and $\deg_if<\max_B(i)$ gives $\frac{x_i}{x_m}f \in B$.
\end{proof}

\section{Proof of Theorem \ref{thm:main}}\label{sec:proof}
Let $B_1, \ldots, B_s$ be bases in $\K[X]$ with the SEP, and let $B=B_1 \cdots B_s$. We let $Y$ be the set of  $\prod_{j=1}^s|B_j|$ variables $y_v$ indexed by vectors $v=(f_1, \ldots, f_s)$ where $f_j \in B_j$. We define our homomorphism $\varphi: \K[Y] \to \K[X]$ so that $y_v$ maps to the product of the entries in the vector $v$. For a monomial $y_{v_1} \cdots y_{v_d} \in \K[Y]$ we associate a $d\times s$ matrix with row vectors $v_1, \ldots , v_d$. The matrix associated to a monomial is unique up to permutations of the rows, and $\varphi(   y_{v_1} \cdots y_{v_d})$ is the product of all the entries in this matrix. 

Notice that the toric ideal $\J_B= \ker \varphi$, as we defined it here, may contain binomials of degree one, as it can happen that $\varphi(y_v)=\varphi(y_w)$ even though $v \ne w$. Let $\Ll_B$ denote the ideal generated by the binomials of degree one in $\J_B$. Let $\E_B$ be the ideal generated by the symmetric exchange relations in $\J_B$. Also, let $\Pp_B$ be the ideal generated by the binomials $F-G \in \J_B$ such that the matrices of $F$ and $G$ differ in at most one column (up to a permutation of the rows). We will see that $\J_B=\E_B+\Ll_B=\Pp_B+\Ll_B$. This proves that $\J_B$ is generated by the symmetric exchange relations, as the given generating set of $\E_B$ will still correspond to symmetric exchange relations after reducing modulo $\Ll_B$. The fact that $\J_B = \Pp_B + \Ll_B$ proves the second part of Theorem \ref{thm:main}, as 
\[
\frac{\K[Y]}{\Pp_B} \cong \frac{\K[Y_1]}{\J_{B_1}} \circ \dots \circ \frac{\K[Y_s]}{\J_{B_s}}.
 \]
Now it remains to prove $\J_B=\E_B+\Ll_B=\Pp_B+\Ll_B$, or equivalently $\J_B\subseteq (\E_B+\Ll_B) \cap (\Pp_B+\Ll_B)$.

\begin{lem}\label{lem:rearrange}
Let $F$ be a monomial in $\K[Y]$, and let $(f_{ij})$ be the associated matrix. Let $F'$ be the monomial in $\K[Y]$ with the associated matrix $(f'_{ij})$ obtained from $(f_{ij})$ by permuting the elements in one column. Then $F-F' \in \E_B \cap \Pp_B$.
\end{lem}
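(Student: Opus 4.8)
The plan is to reduce to the case of a transposition of two entries within a single column, and then chain such transpositions together. Since any permutation of a column is a product of transpositions, and since $\E_B \cap \Pp_B$ is an ideal (in particular closed under the telescoping trick $F - F' = (F - F'') - (F' - F'')$ used repeatedly in the proof of Lemma~\ref{lem:nonprop_exchange_rel}), it suffices to prove the statement when $(f'_{ij})$ is obtained from $(f_{ij})$ by swapping two entries in one column. So assume column $p$ has entries $f_{ap} = f$ and $f_{bp} = g$ in rows $a$ and $b$, and $F'$ has these two swapped while all other entries agree. Write $v_a, v_b$ for rows $a,b$ of the first matrix and $v'_a, v'_b$ for the corresponding rows of the second; then $v'_a$ agrees with $v_a$ except in coordinate $p$ where it has $g$ instead of $f$, and symmetrically for $v'_b$. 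Let $y_{v_a} y_{v_b}$ and $y_{v'_a} y_{v'_b}$ be the corresponding degree-two factors; the claim will follow once we show $y_{v_a} y_{v_b} - y_{v'_a} y_{v'_b} \in \E_B \cap \Pp_B$, since multiplying a generator of an ideal by the common monomial $F / (y_{v_a} y_{v_b})$ stays in the ideal, and this binomial visibly lies in $\Pp_B$ because the two matrices differ only in column $p$ (only rows $a,b$ of that column, in fact).

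Now I would analyze the swap itself. If $f = g$ there is nothing to prove ($F = F'$). Otherwise, since $B_p$ has the SEP and $f, g \in B_p$, if $\deg_i f > \deg_i g$ and $\deg_j f < \deg_j g$ for some $i, j$ then $\tfrac{x_j}{x_i} f \in B_p$ and $\tfrac{x_i}{x_j} g \in B_p$. The key observation is that such $i, j$ must exist: since $f$ and $g$ are distinct monomials of the same degree, there is some $i$ with $\deg_i f > \deg_i g$ and some $j$ with $\deg_j f < \deg_j g$. Having fixed such an exchange, I claim we can realize the swap of the column-$p$ entries as a single symmetric exchange relation at the level of $Y$. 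Indeed, consider the monomial $y_{v_a} y_{v_b}$. We have $\deg_i \varphi(y_{v_a}) - \deg_i \varphi(y_{v_b})$ — but here one must be careful: the other columns of rows $a$ and $b$ contribute to $\varphi(y_{v_a})$ and $\varphi(y_{v_b})$, so the inequality $\deg_i \varphi(y_{v_a}) > \deg_i \varphi(y_{v_b})$ need not hold even though $\deg_i f > \deg_i g$. This is precisely where Lemma~\ref{lem:nonprop_exchange_rel} does the work: we have $\varphi(y_{v'_a}) = \tfrac{x_?}{x_?}\varphi(y_{v_a})$? No — rather, the point is that $\varphi(y_{v'_a})$ and $\varphi(y_{v_a})$ are generally \emph{not} related by a single variable ratio, because swapping $f$ for $g$ in one coordinate changes the monomial by the ratio $g/f$, which need not be of the form $x_k/x_\ell$. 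So the honest approach is: first move within $B_p$ from $(f,g)$ to $(\tfrac{x_j}{x_i}f, \tfrac{x_i}{x_j}g)$, which \emph{is} a single-variable modification of each of $f$ and $g$ and hence of $\varphi(y_{v_a})$ and $\varphi(y_{v_b})$; apply Lemma~\ref{lem:nonprop_exchange_rel} to conclude the corresponding $Y$-binomial lies in $\E_B$; and then note that this elementary move on column $p$ strictly decreases some natural potential (e.g.\ $\sum_k |\deg_k f - \deg_k g|$ or a lexicographic comparison), so iterating such moves, interleaved with the telescoping identity, eventually transports $(f,g)$ to $(g,f)$ while staying inside $\E_B$. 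The same moves stay inside $\Pp_B$ because at every stage only column $p$ (indeed only its rows $a,b$) is altered.

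The main obstacle, then, is the bookkeeping in the last step: showing that repeated SEP exchanges within $B_p$, applied to the ordered pair $(f,g)$, can be chained to effect the full transposition, and that each intermediate binomial lies in $\E_B \cap \Pp_B$. For the $\Pp_B$ membership this is automatic since only one column ever changes. For the $\E_B$ membership one invokes Lemma~\ref{lem:nonprop_exchange_rel} at each elementary step — that lemma is exactly designed to absorb the cases where the degree inequalities at the level of the whole rows go the "wrong" way — and then combines the pieces by the telescoping identity $F - F' = \sum (\text{elementary binomials})$. A clean way to organize the induction is on $d' := \#\{k : \deg_k f \neq \deg_k g\}$ or on the total discrepancy $\sum_k|\deg_k f - \deg_k g|$: a single well-chosen SEP exchange $(f,g) \mapsto (\tfrac{x_j}{x_i}f, \tfrac{x_i}{x_j}g)$ with $\deg_i f > \deg_i g$, $\deg_j f < \deg_j g$ reduces this discrepancy by $2$, and the base case (discrepancy $0$) is $f = g$. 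I expect the write-up to hinge on choosing $i,j$ to guarantee genuine progress and on the careful invocation of Lemma~\ref{lem:nonprop_exchange_rel} with the variable ratio $x_j / x_i$ relating $\varphi(y_{v_a})$ to its image after the elementary move.
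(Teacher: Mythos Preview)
Your overall plan --- reduce to a transposition in a single column, factor out the common part to get a degree-two binomial, and then realize the swap as a chain of single-variable exchanges to which Lemma~\ref{lem:nonprop_exchange_rel} applies at each step --- is exactly the paper's approach. The membership in $\Pp_B$ is indeed immediate.

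There is one genuine gap in your termination argument. Your proposed potential $\sum_k|\deg_k f - \deg_k g|$, measured between the \emph{current} first and second components, need not decrease: take $f=x_1x_3$, $g=x_2x_4$ in a squarefree SEP basis. Every elementary exchange keeps this sum at $4$; worse, the choice $i=1,j=2$ followed by $i=2,j=1$ cycles back to the start. The target $(g,f)$ has the same value of this potential as the start $(f,g)$, so it cannot serve as a ranking function for reaching the swap. A potential that \emph{does} work is the distance from the current first component to the fixed target $g$, and then one must choose $i,j$ by comparing the current first component to $g$ (not to the current second component); SEP applied to that pair (both lie in $B_p$) gives the required membership, and the distance drops by exactly $2$ each step.

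The paper sidesteps this bookkeeping entirely: instead of a dynamic choice of $i,j$, it fixes the whole path in advance by writing
\[
f_{21}=\frac{x_{a_1}}{x_{b_1}}\cdots\frac{x_{a_k}}{x_{b_k}}\,f_{11}
\qquad\text{with }\{a_1,\dots,a_k\}\cap\{b_1,\dots,b_k\}=\emptyset,
\]
and then invokes Theorem~\ref{thm:veronese_type} (not SEP directly) to see that every partial product $f_{11}^{(m)}$ lies in $B_1$, since each coordinate of $f_{11}^{(m)}$ sits between the corresponding coordinates of $f_{11}$ and $f_{21}$ and hence between $\min_{B_1}$ and $\max_{B_1}$. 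This guarantees the chain reaches $(f_{21},f_{11})$ in exactly $k$ steps with no need for a potential, and Lemma~\ref{lem:nonprop_exchange_rel} handles each step exactly as you anticipated.
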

\begin{proof}
It is clear from the definition of $\Pp_B$ that $F-F' \in \Pp_B$, so we need to prove that $F-F' \in \E_B$. 
As any permutation can by performed as a sequence of transpositions, it is enough to consider a transposition of elements in a column. Without loss of generality, we assume that $f'_{11}=f_{21}$, $f'_{21}=f_{11}$, and otherwise $f'_{ij}=f_{ij}$. We define $u_i=(f_{i1}, \ldots, f_{is})$ and $v_i=(f'_{i1}, \ldots, f'_{is})$, so that $F=y_{u_1} \cdots y_{u_d}$ and $F'=y_{v_1} \cdots y_{v_d}$. Then
\[
F-F'=y_{u_1} \cdots y_{u_d}-y_{v_1} \cdots y_{v_d} = (y_{u_1}y_{u_2}-y_{v_1}y_{v_2})y_{u_3} \cdots y_{u_d},
\] 
and we want to prove that $y_{u_1}y_{u_2}-y_{v_1}y_{v_2} \in \E_B$. Recall that $f_{11},f_{21} \in B_1$, which is a basis with the SEP. We have
\[
f_{21}= \frac{x_{a_1}x_{a_2}}{x_{b_1}x_{b_2}} \cdots \frac{x_{a_k}}{x_{b_k}}f_{11}
\]
for some $a_1, \ldots, a_k, b_1, \ldots b_k$, and we may assume $\{a_1, \ldots, a_k\} \cap \{ b_1, \ldots b_k\} = \emptyset$, so that no variable occurs both in the numerator and in the denominator. By Theorem \ref{thm:veronese_type} 
\[
f_{11}^{(m)}:=\frac{x_{a_1}x_{a_2}}{x_{b_1}x_{b_2}} \cdots \frac{x_{a_m}}{x_{b_m}}f_{11} \in B_1, \ \ m=1, \ldots, k,
\]
since with this construction $\deg f_{11}^{(m)} = \deg f_{11}$ and
\[
{\min}_{B_1}\!(i) \le \min(\deg_i\!f_{11}, \deg_i\!f_{21}) \le \deg_i\!f_{11}^{(m)} \le \max(\deg_i\!f_{11}, \deg_i\!f_{21}) \le {\max}_{B_1}\!(i).
\]
In the same way
\[
f_{21}^{(m)}:=\frac{x_{b_1}x_{b_2}}{x_{a_1}x_{a_2}} \cdots \frac{x_{b_m}}{x_{a_m}}f_{21} \in B_1, \ \ m=1, \ldots, k.
\]
Let
\[
u_1^{(m)} = (f_{11}^{(m)}, f_{12}, \ldots, f_{1s}) \ \mbox{and} \ u_2^{(m)} =(f_{21}^{(m)}, f_{22}, \ldots, f_{2s}), \ \mbox{for} \ m=1, \ldots, k
\]
and notice that $u_1^{(k)} = v_1$ and $u_2^{(k)}=v_2$. By Lemma \ref{lem:nonprop_exchange_rel} 
\[
y_{u_1}y_{u_2} = y_{u_1^{(1)}}y_{u_2^{(1)}}= y_{u_1^{(2)}}y_{u_2^{(2)}} = \ldots =  y_{u_1^{(k)}}y_{u_2^{(k)}}=y_{v_1}y_{v_2} \ \mbox{in} \ \K[Y]/\E_B,
\]
and it follows that $F=F'$ in $\K[Y]/\E_B$. 
\end{proof}

\begin{lem}\label{lem:induction}
Let $F$ and $G$ be monomials in $\K[Y]$ such that their matrices are identical in the first $m<s$ columns, and $F-G \in \J_B$. If $\J_{B'}=\E_{B'}+\Ll_{B'}=\Pp_{B'}+\Ll_{B'}$, where $B'=B_{m+1} \cdots B_s$, then $F-G \in (\E_B+\Ll_B) \cap (\Pp_B + \Ll_B)$. 
\end{lem}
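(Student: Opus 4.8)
The plan is to reduce the problem about $B=B_1\cdots B_s$ to the factored situation $B_1\cdots B_m$ and $B'=B_{m+1}\cdots B_s$, exploiting the hypothesis on $\J_{B'}$. Write each variable $y_v$ with $v=(f_1,\dots,f_s)$ as carrying a ``head'' $(f_1,\dots,f_m)$ and a ``tail'' $(f_{m+1},\dots,f_s)$. Since $F$ and $G$ agree in the first $m$ columns, after using Lemma~\ref{lem:rearrange} I may permute the rows of $G$ column-by-column without leaving the class of $G$ modulo $\E_B$ (hence also modulo $\E_B+\Ll_B$); the point of invoking Lemma~\ref{lem:rearrange} is that permuting a single column is realized by symmetric exchange relations. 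So first I would use Lemma~\ref{lem:rearrange} to arrange that $F$ and $G$ have \emph{identical} head rows: row $i$ of $F$ and row $i$ of $G$ have the same first $m$ entries for every $i$. This is possible precisely because the two matrices agree in the first $m$ columns as multisets of column vectors — actually one must be slightly careful and match up rows so that the head-submatrices coincide, which can be done by permuting one column at a time.

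Once the heads are matched, the difference $F-G$ lives ``inside the fibers''. Group the rows of $F$ (equivalently $G$) according to their common head $(f_1,\dots,f_m)$; say the head value $h$ occurs on a set of $d_h$ rows. The condition $\varphi(F)=\varphi(G)$ together with the matched heads forces, for each head value $h$, that the corresponding $d_h$ tail-rows of $F$ and of $G$ have the same product in $\K[X]$ — i.e. they give a binomial in $\J_{B'}$ (taken in degree $d_h$). Here I would apply the hypothesis $\J_{B'}=\E_{B'}+\Ll_{B'}$ to rewrite, block by block, the tail part of $G$ into the tail part of $F$ modulo $\E_{B'}+\Ll_{B'}$. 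The key step is to lift an $\E_{B'}$-relation or an $\Ll_{B'}$-relation on tails, performed on the rows with a fixed head $h$, back to an $\E_B$- resp.\ $\Ll_B$-relation on the full matrices: a symmetric exchange relation $\frac{x_j}{x_i}$ applied to two tail-rows with identical head $h$ is still a symmetric exchange relation on the corresponding full rows (the head entries are unchanged, so the relevant $\deg_i,\deg_j$ inequalities are governed entirely by the tail), and a degree-one relation on tails with matched head is a degree-one relation on full variables, landing in $\Ll_B$. Doing this for each head block in turn transforms $G$ into $F$ modulo $\E_B+\Ll_B$, giving $F-G\in\E_B+\Ll_B$; the same argument with $\Pp_{B'}+\Ll_{B'}$ in place of $\E_{B'}+\Ll_{B'}$, together with the fact that a $\Pp_{B'}$-relation (matrices differing in one column among the last $s-m$) becomes a $\Pp_B$-relation (differing in one column of the full matrix), gives $F-G\in\Pp_B+\Ll_B$.

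The main obstacle I anticipate is the bookkeeping in the first step: making precise that ``agree in the first $m$ columns'' lets one permute rows (via Lemma~\ref{lem:rearrange}, one column at a time) to achieve matched head-submatrices, and checking that these permutations do not disturb the columns already aligned. A secondary subtlety is verifying cleanly that $\E_{B'}$- and $\Ll_{B'}$-generators lift to $\E_B$- and $\Ll_B$-generators when restricted to rows sharing a head, and that the intermediate monomials produced along the way still have all their rows in $B$ — this follows because each row is (head)$\cdot$(something in $B_{m+1}\cdots B_s$) and products of polymatroidal, indeed SEP, bases behave well, but it needs to be stated. Once these two points are in hand the rest is a straightforward telescoping argument over the head blocks, exactly parallel to the chain of equalities in the proof of Lemma~\ref{lem:rearrange}.
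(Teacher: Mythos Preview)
Your proposal contains a genuine gap in the second step. You claim that once the head-submatrices of $F$ and $G$ are aligned row by row, the condition $\varphi(F)=\varphi(G)$ forces, \emph{for each head value $h$ separately}, the tail-rows of $F$ and $G$ in the $h$-block to have the same product in $\K[X]$. This is false: all that follows from $\varphi(F)=\varphi(G)$ and matched heads is that the \emph{total} tail products agree, $\prod_{i}\prod_{j>m} f_{ij}=\prod_{i}\prod_{j>m} g_{ij}$. For a counterexample take $s=2$, $m=1$, $B_1=B_2=\{x_1,x_2\}$, with $F$ given by rows $(x_1,x_2),(x_2,x_1)$ and $G$ by rows $(x_1,x_1),(x_2,x_2)$: the first columns coincide and $\varphi(F)=\varphi(G)=x_1^2x_2^2$, yet in the head-block for $x_1$ the tails are $x_2$ versus $x_1$. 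Hence there is no per-block relation in $\J_{B'}$ to invoke, and the ``block by block'' rewriting cannot proceed as you describe.

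The remedy --- which is what the paper does --- is to drop the head-block decomposition and use the single degree-$d$ relation in $\J_{B'}$ coming from equality of the total tail products. Your lifting observation remains correct and in fact becomes simpler: a generator of $\E_{B'}$ (resp.\ $\Ll_{B'}$, $\Pp_{B'}$) acting on two tail-rows lifts to an element of $\E_B$ (resp.\ $\Ll_B$, $\Pp_B$) via Lemma~\ref{lem:nonprop_exchange_rel}, \emph{regardless} of whether the two rows carry the same head. Applying the lifted relations to $G$ produces a monomial $H$ whose head rows are those of $F$ and $G$ and whose multiset of tail rows equals that of $F$; one finishes by applying Lemma~\ref{lem:rearrange} to each of the last $s-m$ columns to permute $H$'s tails into $F$'s. (Note also that your first step is unnecessary: the lemma's hypothesis already posits that the first $m$ columns of the chosen matrix representatives coincide entry by entry.)
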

The analogous statement is also true if the matrices of $F$ and $G$ are identical in the last $m$ columns, or more generally in any subset of the columns. The proof in this situation follows the same recipe. 
\begin{proof}
As the first $m$ columns of the matrices $(f_{ij})$ and $(g_{ij})$ representing $F$ and $G$ are identical, and $F-G \in \J_B$, we have 
\[
\prod_{\substack{1 \le i \le d \\ m < j \le s}}f_{ij} = \prod_{\substack{1 \le i \le d \\ m < j \le s}}g_{ij}
\]
which corresponds to a relation in $\J_{B'}$. Then there is a monomial $H \in \K[Y]$ such that $G=H$ in $\K[Y]/((\E_B+\Ll_B) \cap (\Pp_B + \Ll_B))$ and $H$ is identical to $F$ and $G$ in the first $m$ columns, and identical to $F$ in the last $s-m$ columns after possibly permuting the rows. Then we can apply Lemma \ref{lem:rearrange} and get that $G=H=F$ in $\K[Y]/((\E_B+\Ll_B) \cap (\Pp_B + \Ll_B))$.
\end{proof}

\begin{lem}\label{lem:exchange}
Let $F$ be a monomial in $\K[Y]$ such that $\frac{x_1}{x_2}\varphi(F) \in B^d$. Then there are monomials $F'$ and $H$ such that $F-F' \in \E_B\cap \Pp_B$, $\varphi(H)=\frac{x_1}{x_2}\varphi(F')$, and the matrices associated to $F'$ and $H$ differ in at most one entry in each column. 
\end{lem}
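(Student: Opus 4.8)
The plan is to use the column decomposition of $F$. Write $F=y_{v_1}\cdots y_{v_d}$, let $(f_{ij})$ be the associated $d\times s$ matrix, and set $c_j:=\prod_{i=1}^d f_{ij}$, the product of the $j$-th column; then $c_j\in B_j^d$ and $\varphi(F)=c_1\cdots c_s\in B_1^d\cdots B_s^d=B^d$. By Proposition~\ref{prop:power} each $B_j^d$ has the SEP, hence is of Veronese type by Theorem~\ref{thm:veronese_type}, with ${\min}_{B_j^d}(i)=d\,{\min}_{B_j}(i)$ and ${\max}_{B_j^d}(i)=d\,{\max}_{B_j}(i)$. Since $\frac{x_1}{x_2}\varphi(F)\in B^d=B_1^d\cdots B_s^d$, it admits a decomposition $\frac{x_1}{x_2}\varphi(F)=e_1\cdots e_s$ with $e_j\in B_j^d$, and the whole argument turns on choosing such a decomposition that is as close to $(c_1,\dots,c_s)$ as possible.

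\emph{Step 1: a chain of unit moves across the columns.} Among all decompositions $\frac{x_1}{x_2}\varphi(F)=e_1\cdots e_s$ with $e_j\in B_j^d$, fix one minimizing $\Phi:=\sum_{i,j}|\deg_i e_j-\deg_i c_j|$. The point is that the Veronese box always leaves room to move in the useful direction: if $\deg_p e_j>\deg_p c_j$ then $\deg_p e_j>d\,{\min}_{B_j}(p)$, so $e_j$ may be decreased in coordinate $p$ while staying in $B_j^d$, and symmetrically if $\deg_q e_j<\deg_q c_j$ then $\deg_q e_j<d\,{\max}_{B_j}(q)$ and $e_j$ may be increased in coordinate $q$ inside $B_j^d$. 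A standard minimal-transport argument now shows that for a $\Phi$-minimizer the discrepancies $\deg_i e_j-\deg_i c_j$ contain no directed cycle and each column carries at most one such move (if a column carried two, re-describing that column's change would expose a cycle, which could then be removed, lowering $\Phi$); since the discrepancies sum overall to the exponent vector of $x_1/x_2$ (that is, $+1$ at coordinate $1$, $-1$ at coordinate $2$, zero elsewhere), they form a single simple path: there are distinct indices $2=p_0,p_1,\dots,p_t=1$ and distinct columns $j_1,\dots,j_t$ with $e_{j_\ell}=\frac{x_{p_\ell}}{x_{p_{\ell-1}}}c_{j_\ell}$ for each $\ell$ and $e_j=c_j$ otherwise. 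In particular $\deg_{p_\ell}c_{j_\ell}<d\,{\max}_{B_{j_\ell}}(p_\ell)$ and $\deg_{p_{\ell-1}}c_{j_\ell}>d\,{\min}_{B_{j_\ell}}(p_{\ell-1})$.

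\emph{Step 2: realizing each move by a single entry, staying inside $\E_B\cap\Pp_B$.} Fix a chain column $j_\ell$; write $a=p_\ell$, $b=p_{\ell-1}$, $B'=B_{j_\ell}$. From $\deg_a c_{j_\ell}<d\,{\max}_{B'}(a)$ some row of column $j_\ell$ can absorb an extra $x_a$, and from $\deg_b c_{j_\ell}>d\,{\min}_{B'}(b)$ some row can give up an $x_b$; if these rows coincide, that single entry already absorbs the move (by Theorem~\ref{thm:veronese_type} applied to $B'$). If not, one first performs a symmetric exchange of two entries inside column $j_\ell$ to arrange that they do — this is exactly what Lemma~\ref{lem:shortcut} provides for $B'$, the degree inequalities above guaranteeing that the required rows exist, as in the proof of that lemma. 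Such a within-column exchange changes only column $j_\ell$ and preserves $\varphi$, so it lies in $\E_B\cap\Pp_B$ by Lemma~\ref{lem:nonprop_exchange_rel}; column permutations are likewise in $\E_B\cap\Pp_B$ by Lemma~\ref{lem:rearrange}. Performing these operations for each $\ell$ (the $j_\ell$ being distinct, they do not interfere) replaces $F$ by a monomial $F'$ with $F-F'\in\E_B\cap\Pp_B$ in each of whose chain columns some entry $f'_{r_\ell j_\ell}$ satisfies $\frac{x_{p_\ell}}{x_{p_{\ell-1}}}f'_{r_\ell j_\ell}\in B_{j_\ell}$.

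\emph{Step 3: conclusion, and the main difficulty.} Let $H$ be obtained from $F'$ by replacing $f'_{r_\ell j_\ell}$ with $\frac{x_{p_\ell}}{x_{p_{\ell-1}}}f'_{r_\ell j_\ell}$ for every $\ell$ and leaving all other entries untouched; this is a monomial of $\K[Y]$ whose matrix differs from that of $F'$ in exactly one entry in each column $j_1,\dots,j_t$ and in no entry of any other column, and
\[
\varphi(H)=\Bigl(\prod_{\ell=1}^{t}\frac{x_{p_\ell}}{x_{p_{\ell-1}}}\Bigr)\varphi(F')=\frac{x_{p_t}}{x_{p_0}}\varphi(F')=\frac{x_1}{x_2}\varphi(F'),
\]
which is what is required. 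The main obstacle is Step 1: turning the single hypothesis $\frac{x_1}{x_2}\varphi(F)\in B^d$ into a decomposition of it agreeing with the columns of $F$ except along one simple path of unit moves, each confined to the Veronese box of its column. Once that chain is available, Lemma~\ref{lem:shortcut} handles the per-column adjustment and Lemmas~\ref{lem:rearrange} and~\ref{lem:nonprop_exchange_rel} take care of membership in $\E_B\cap\Pp_B$, so the remainder is bookkeeping.
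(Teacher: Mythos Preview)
Your proof is correct and reaches the same conclusion via a different organization of essentially the same ingredients. The paper works directly at the entry level: it writes $\frac{x_1}{x_2}\varphi(F)=\prod_{i,j} h_{ij}$ arbitrarily, arranges the total change as a single telescoping chain $\frac{x_1}{x_{a_1}}\frac{x_{a_1}}{x_{a_2}}\cdots\frac{x_{a_k}}{x_2}$, and then processes columns one by one --- for column $j$ it takes the leftmost and rightmost chain factors coming from column $j$, uses Lemma~\ref{lem:shortcut} to merge them into a single entry (modifying $F$ within $\E_B\cap\Pp_B$), and discards all intervening factors. You instead work first at the level of column products: choosing a decomposition $\frac{x_1}{x_2}\varphi(F)=e_1\cdots e_s$ with $e_j\in B_j^d$ minimizing the total discrepancy $\Phi$, you argue that the changes $e_j/c_j$ form a single simple path with each column used at most once, and only then descend to entries, applying Lemma~\ref{lem:shortcut} once per chain column to locate a single entry absorbing that column's move. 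Your route makes the global path structure explicit before any entry manipulation, which is conceptually cleaner; the cost is the $\Phi$-minimization argument, where your justification ``re-describing that column's change would expose a cycle'' is correct but compressed --- it is worth noting explicitly that the acyclic degree-balanced multigraph is already a simple path from $2$ to $1$, and that re-pairing two same-column edges turns a segment of that path into a removable cycle, strictly lowering $\Phi$. The paper's iterative scheme sidesteps that argument but is correspondingly less transparent about why the procedure terminates with at most one changed entry per column.
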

\begin{proof}
Let $(f_{ij})$ be the matrix associated to $F$. Since $\frac{x_1}{x_2}\varphi(F) \in B^d$ there are $h_{ij}$'s, such that
\[
\frac{x_1}{x_2}\varphi(F)=\prod_{\substack{1 \le i \le d \\ 1 \le j \le s }}h_{ij}, \ \ \ h_{ij} \in B_j.
\]
For each pair $i,j$ there is a multiset $\Lambda_{ij} \subset \{1, \ldots, n\}^2$ such that 
\[
h_{ij} = \Big(\prod_{(a,b) \in \Lambda_{ij}}\frac{x_a}{x_b}\Big)f_{ij}. 
\]
The choice of the $h_{ij}$'s is not unique, and the goal is the choose them so that for each $j$ there is only one $i$ for which $\Lambda_{ij}$ in nonempty. First note that we can choose $\Lambda_{ij}$ so that for each $(a,b) \in \Lambda_{ij}$ it holds that 
\[\deg_af_{ij}<\deg_ah_{ij} \ \text{and} \ \deg_bf_{ij}>\deg_b,h_{ij}.\]
 It follows that $\frac{x_a}{x_b}f_{ij} \in B_j$, since $B_j$ has the SEP. As 
\[
\Big(\prod_{\substack{1 \le i \le d \\ 1 \le j \le s }} \prod_{(a,b) \in \Lambda_{ij}}\frac{x_a}{x_b}\Big) \varphi(F) =\prod_{\substack{1 \le i \le d \\ 1 \le j \le s }} \prod_{(a,b) \in \Lambda_{ij}}\frac{x_a}{x_b}f_{ij} = \prod_{\substack{1 \le i \le d \\ 1 \le j \le s }}h_{ij} =  \frac{x_1}{x_2}\varphi(F)
\]
we have
\[
\Big(\prod_{\substack{1 \le i \le d \\ 1 \le j \le s }} \prod_{(a,b) \in \Lambda_{ij}}\frac{x_a}{x_b}\Big) = \frac{x_1}{x_2}
\]
and we may order the factors $\frac{x_a}{x_b}$ in the left hand side so that 
\begin{equation}\label{eq:lem_exchange}
\Big(\prod_{\substack{1 \le i \le d \\ 1 \le j \le s }} \prod_{(a,b) \in \Lambda_{ij}}\frac{x_a}{x_b}\Big) = \frac{x_1}{x_{a_1}}\frac{x_{a_1}}{x_{a_2}}\frac{x_{a_2}}{x_{a_3}} \cdots \frac{x_{a_{k-1}}}{x_{a_k}} \frac{x_{a_k}}{x_2}
\end{equation}
Each factor in the right hand side still corresponds to an index pair in some $\Lambda_{ij}$. 
Now, if there are two different values of $i$ such that $\Lambda_{i1}$ is nonempty we do the following. In the right hand side of (\ref{eq:lem_exchange}) consider the leftmost and rightmost factors corresponding to index pairs from any $\Lambda_{i1}$. Say for instance that it is $(a_1,a_2) \in \Lambda_{11}$ and $(a_5,a_6) \in \Lambda_{21}$. Then 
\[
f_{11},f_{21}, \frac{x_{a_1}}{x_{a_2}} f_{11}, \ \text{and} \ \frac{x_{a_5}}{x_{a_6}}f_{21} \ \mbox{all belong to} \ B_1.
\]
By Lemma \ref{lem:shortcut} there is an $m$ such that 
\[
 \frac{x_{a_1}}{x_m} f_{11},\frac{x_{a_6}}{x_m} f_{11}, \ \text{and} \ \frac{x_m}{x_{a_6}}f_{21} \in B_1.
\]
Now we define $F'$ as the matrix $(f'_{ij})$ with $f'_{ij}=f_{ij}$ except $f'_{11}=\frac{x_{a_6}}{x_m} f_{11}$ and $f'_{21}=\frac{x_m}{x_{a_6}}f_{21}$. Notice that $F'-F \in \E_B \cap \Pp_B$. In $\Lambda_{11}$ we replace $(a_1,a_2)$ by $(a_1,a_6)$, and we remove $(a_2,a_3), (a_3,a_4),(a_4,a_5),$ and $(a_5,a_6)$ from their corresponding $\Lambda_{ij}$'s. This means that $\Lambda_{11}=\{(a_1,a_6)\}$ and $\Lambda_{i1}=\emptyset$ for $i>1$. We also redefine the $h_{ij}$'s as
\[
h_{ij} = \Big(\prod_{(a,b) \in \Lambda_{ij}}\frac{x_a}{x_b}\Big)f'_{ij}.
\]
Notice that we still have $h_{ij} \in B_j$, in particular
\[
h_{11}=\frac{x_{a_1}}{x_{a_6}}f'_{11}=\frac{x_{a_1}}{x_{a_6}} \frac{x_{a_6}}{x_m} f_{11}=\frac{x_{a_1}}{x_m}f_{11} \in B_1
\]
and
\[
h_{i1}=f'_{i1} \in B_1 \ \text{for} \ i>1.
\]
Now we do the same procedure for $j=2$, but with $F$ replaced by $F'$ and the right hand side of (\ref{eq:lem_exchange}) replaced by
\[
\frac{x_1}{x_{a_1}}\frac{x_{a_1}}{x_{a_6}}\frac{x_{a_6}}{x_{a_7}} \cdots \frac{x_{a_{k-1}}}{x_{a_k}} \frac{x_{a_k}}{x_2}.
\] 
We continue with this for all values of $j$. Finally we end up with $h_{ij}$'s that for each $j$ there is at most one $i$ for which $h_{ij} \ne f'_{ij}$. Then $H$ given by the matrix $(h_{ij})$ together with $F'$ has the desired properties. 
\end{proof}

Now we are ready to prove that $\J_B\subseteq (\E_B+\Ll_B) \cap (\Pp_B+\Ll_B)$. Suppose we have monomials $F$ and $G$ of degree $d \ge 2$ in $\K[Y]$ such that their difference is in $\J_B$. We shall prove that $F=G$ in $\K[Y]/((\E_B+\Ll_B) \cap (\Pp_B+\Ll_B))$ by induction over $s$. The base case $s=1$ is clear, as then $B=B_1$ has the SEP, and the matrices have only one column. Now suppose the statement is true for any product of fewer than $s$ bases with the SEP. The idea of the proof is the following. Let $(f_{ij})$ and $(g_{ij})$ be the matrices corresponding to $F$ and $G$. Say we are in the special situation where $\prod_{i=1}^df_{i1} = \prod_{i=1}^dg_{i1}.$ Then we also have $\prod_{\substack{1 \le i \le d \\ 2 \le j \le s }}f_{ij}=\prod_{\substack{1 \le i \le d \\ 2 \le j \le s }}g_{ij}$. Let $H$ be the monomial in $\K[Y]$ associated to the matrix 
\[
\begin{pmatrix}
g_{11} & f_{12} & \cdots & f_{1s} \\
g_{21} & f_{22} & \cdots & f_{2s} \\
\vdots & \vdots &        & \vdots \\
g_{d1} & f_{d2} & \cdots & f_{ds} 
\end{pmatrix}
\]
We have $\J_{B'}=(\E_{B'}+\Ll_{B'}) \cap (\Pp_{B'}+\Ll_{B'})$ by the induction hypothesis. By Lemma \ref{lem:induction} we have $H=G$ in $\K[Y]/((\E_B+\Ll_B) \cap (\Pp_B+\Ll_B))$. If $f_{i1}=g_{i1}$ for each $i$ we are done. If not we can apply the induction hypothesis and Lemma \ref{lem:induction} one more time to $F$ and $H$, since they are identical in the last column. It then follows that $F=H=G$ in $\K[Y]/((\E_B+\Ll_B) \cap (\Pp_B+\Ll_B))$. 

Now, what if we are not in the special situation with $\prod_{i=1}^df_{i1} = \prod_{i=1}^dg_{i1}$? If $\prod_{i=1}^df_{i1} \ne \prod_{i=1}^dg_{i1}$ then also $\prod_{\substack{1 \le i \le d \\ 2 \le j \le s }}f_{ij}\ne \prod_{\substack{1 \le i \le d \\ 2 \le j \le s }}g_{ij}$. Since $B_2^d \cdots B_s^d$ is polymatroidal there are say $x_1$ and $x_2$ so that
\begin{equation}\label{eq:proof_mainthm}
\deg_1 \prod_{\substack{1 \le i \le d \\ 2 \le j \le s }}f_{ij} < \deg_1 \prod_{\substack{1 \le i \le d \\ 2 \le j \le s }}g_{ij} \ \text{and} \ \deg_2 \prod_{\substack{1 \le i \le d \\ 2 \le j \le s }}f_{ij}> \deg_2 \prod_{\substack{1 \le i \le d \\ 2 \le j \le s }}g_{ij}
\end{equation}
and 
\[
\frac{x_1}{x_2} \prod_{\substack{1 \le i \le d \\ 2 \le j \le s }}f_{ij} \in B_2^d \cdots B_s^d.
\]
By Lemma \ref{lem:exchange} we can, after replacing $F$ modulo $\E_B\cap \Pp_B$, find $h_{ij}$'s $1 \le i \le d $, $2 \le j \le s$ so that  
\[
\frac{x_1}{x_2} \prod_{\substack{1 \le i \le d \\ 2 \le j \le s }}f_{ij} = \prod_{\substack{1 \le i \le d \\ 2 \le j \le s }}h_{ij}, \ \ \ h_{ij} \in B_j
\]
such that for each $j$ there is at most one $i$'s for which $h_{ij} \ne f_{ij}$. 

Because of (\ref{eq:proof_mainthm}) we also have 
\[
\deg_1\prod_{i=1}^df_{i1}>\deg_1\prod_{i=1}^dg_{i1} \ \text{and} \ \deg_2,\prod_{i=1}^df_{i1}<\deg_2,\prod_{i=1}^dg_{i1}.
\]
By Proposition \ref{prop:power} $B_1^d$ has the SEP, and it follows that 
\[
\frac{x_2}{x_1} \prod_{i=1}^df_{i1} \in B_1^d.
\]
Here we can apply Lemma \ref{lem:exchange} once more, to get that 
\[
\frac{x_2}{x_1} \prod_{i=1}^df_{i1}=\prod_{i=1}^dh_{i1}, \ \ h_{i1} \in B_1
\]
where $h_{i1} \ne f_{i1}$ for at most one $i$. Now, let $H$ be the monomial with the associated matrix $(h_{ij})$, $1 \le i \le d$, $1 \le j \le s$. For each $j$ there is at most one $i$ for which $h_{ij} \ne f_{ij}$. We can use Lemma \ref{lem:rearrange} to, for one $j$ at a time, rearrange the $h_{ij}$'s so that the $i$ where possibly $h_{ij} \ne f_{ij}$ is $i=1$. Since now $F$ and $H$ differ only in the first row they are equal modulo $\Ll_B$. If $\prod_{i=1}^dh_{i1} \ne \prod_{i=1}^dg_{i1}$ we can repeat this process, now with  $H$ and $G$. After finitely many repetitions we will get $\prod_{i=1}^dh_{i1} =\prod_{i=1}^dg_{i1}$. Then we have arrived in the special situation described in the beginning of this proof, and the result follows by induction. 

\section{Rees algebras}
An object closely related to the toric ring $\K[B]$ is the \emph{Rees algebra} of the monomial ideal generated by $B$. In general, the Rees algebra of an ideal $I\subseteq \K[X]$ is defined as $\R(I)= \bigoplus_{j \ge 0} I^jt^j$. If $I=(f_1, \ldots, f_m)$ we have $\R(I)=\K[X][f_1t, \ldots, f_mt]$. Let $Y=\{y_1, \ldots, y_m\}$ and let $\K[X,Y]$ be the polynomial ring on the variables $X \cup Y$ with the bigrading $\deg x_i=(1,0)$ and $\deg y_i = (0,1)$. We say that an element has degree $(k,*)$ if it has degree $(k,s)$ for any $s$, and similarly for $(*,k)$. 
We can present $\R(I)$ as $\K[X,Y]/\ker \phi$ where $\phi:\K[X,Y] \to \K[X][t]$ is the homomorphism defined by $\phi(x_i)=x_i$ and $\phi(y_i)=f_it$. The defining ideal $\ker \phi$ is called the \emph{Rees ideal} of $I$. An ideal $I$ is said to be of \emph{fiber type} if its Rees ideal is generated in degrees $(0,*)$ and $(*,1)$.  

\begin{lem}[{\cite[Lemma 3.2]{N}}]\label{lem:rees}
If $I$ has linear free resolution and is of fiber type then its Rees ideal is generated in degrees $(0,*)$ and $(1,1)$. 
\end{lem}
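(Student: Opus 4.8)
The plan is to identify the degree-$(*,1)$ part of the Rees ideal $\ker\phi$ with the module of first syzygies of $I$, and then to use the linear resolution hypothesis to express every such syzygy in terms of the linear ones.

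First recall that, having a linear free resolution, $I$ is generated in a single degree, say $d$; fix a minimal system of generators $f_1, \ldots, f_m$, all of degree $d$. Linearity of the resolution says in particular that the first syzygy module $Z$ of $I$ (the kernel of the map $\K[X]^m \to I$ sending the $i$-th basis vector to $f_i$) is generated in degree $d+1$, i.e.\ $Z$ is generated by syzygies $(a_1, \ldots, a_m)$ in which each $a_i$ is a linear form or zero.

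Next I would translate this into the bigraded setting of $\K[X,Y]$. A homogeneous element of $\ker\phi$ of degree $(k,1)$ is precisely an expression $\sum_i a_i y_i$ with $a_i \in \K[X]$ homogeneous of degree $k$ and $\sum_i a_i f_i = 0$, that is, a homogeneous first syzygy of $I$ of degree $d+k$; since the $f_i$ form a minimal generating set there is no element of degree $(0,1)$, so this part of $\ker\phi$ sits in degrees $(k,1)$ with $k \ge 1$. By the previous paragraph such a syzygy equals $\sum_\ell p_\ell z_\ell$, where the $z_\ell$ are the degree-$(d+1)$ generators of $Z$ and each $p_\ell \in \K[X]$ is homogeneous of degree $k-1$. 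Writing $w_\ell = \sum_i (z_\ell)_i\, y_i \in \ker\phi$, each $w_\ell$ has degree $(1,1)$ and $\sum_i a_i y_i = \sum_\ell p_\ell w_\ell$. Hence every degree-$(*,1)$ element of $\ker\phi$ lies in the $\K[X,Y]$-ideal generated by the degree-$(1,1)$ elements of $\ker\phi$. It remains to assemble the pieces: since $I$ is of fiber type, $\ker\phi$ is generated by its elements of degrees $(0,*)$ and $(*,1)$, and by what we just showed each degree-$(*,1)$ generator already lies in the ideal generated by the degree-$(0,*)$ and degree-$(1,1)$ elements of $\ker\phi$; therefore $\ker\phi$ is generated in degrees $(0,*)$ and $(1,1)$.

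I do not expect a genuine obstacle: the content is the bookkeeping identification of a degree-$(k,1)$ element of $\ker\phi$ with a degree-$(d+k)$ first syzygy of $I$, together with the standard fact that a minimal generating set of a module with a linear resolution has its first syzygies generated exactly one degree higher. The only point worth spelling out with a little care is why replacing the fiber-type generators of degree $(*,1)$ by the linear syzygies leaves the generated ideal unchanged, and this is precisely because each such generator is a $\K[X]$-combination — hence a $\K[X,Y]$-ideal multiple — of degree-$(1,1)$ elements of $\ker\phi$.
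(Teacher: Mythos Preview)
The paper does not prove this lemma; it is quoted verbatim from \cite[Lemma 3.2]{N} and used as a black box, so there is no proof in the paper to compare against. Your argument is the standard one and is correct: the identification of the bidegree-$(*,1)$ part of the Rees ideal with the first syzygy module of $I$, together with the fact that a linear resolution forces the syzygies to be generated in degree $d+1$, is exactly what underlies the cited lemma.
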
 

Polymatroidal ideals are of fiber type and have linear free resolution, see \cite{HHV} and \cite{HT}, so Lemma \ref{lem:rees} applies to polymatroidal ideals. If $I$ has the SEP then its Rees ideal is generated in degrees $(0,2)$ and $(1,1)$, see \cite[Theorem 3.5]{N}. We can now extend this theorem to products of ideals with the SEP. 

\begin{thm}\label{thm:rees}
The Rees ideal of a product of ideals with the SEP is generated in degrees $(0,2)$ and $(1,1)$.
\end{thm}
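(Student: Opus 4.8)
The plan is to reduce Theorem \ref{thm:rees} to Theorem \ref{thm:main} by exploiting the standard relationship between the Rees ideal and the toric ideal. Write $I = I_1 \cdots I_s$ where each $I_j$ is generated by a basis $B_j$ with the SEP, and let $B = B_1 \cdots B_s$ so that $I$ is generated by $B$. Since polymatroidal ideals are of fiber type with linear resolution, Lemma \ref{lem:rees} already tells us the Rees ideal is generated in degrees $(0,*)$ and $(1,1)$. So the only thing left to prove is that the part of the Rees ideal in degrees $(0,*)$ — which is exactly the toric ideal $\J_B$ sitting inside $\K[Y]$ — is generated in degree $(0,2)$, i.e.\ by quadrics. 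This is essentially the content of part \eqref{thm-main1} of Theorem \ref{thm:main}, provided one checks that the symmetric exchange relations are genuinely quadratic even after we account for the possible degree-one binomials $\Ll_B$.

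**First I would** recall the setup of Section \ref{sec:proof}: there the variable set $Y$ is indexed by tuples $(f_1,\dots,f_s)$ with $f_j \in B_j$, and $\varphi\colon \K[Y]\to\K[X]$ may have a kernel containing linear binomials, collected in $\Ll_B$. The Rees algebra, however, is presented on a minimal variable set $Y' = \{y_1,\dots,y_m\}$ with one variable per distinct generator of $I$; so $\K[Y']$ is the quotient of $\K[Y]$ obtained by identifying variables $y_v, y_w$ whenever $\varphi(y_v)=\varphi(y_w)$. Under this identification, the toric ideal of $B$ on the minimal generating set is precisely $\J_B / \Ll_B$ transported to $\K[Y']$. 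By Theorem \ref{thm:main}\eqref{thm-main1}, $\J_B = \E_B + \Ll_B$, and the given generators of $\E_B$ are symmetric exchange relations, hence quadratic; these descend to quadratic generators of the toric ideal on $Y'$. Therefore the degree-$(0,*)$ part of the Rees ideal of $I$ is generated in degree $(0,2)$.

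**Then I would** combine the two pieces: Lemma \ref{lem:rees} gives generators in degrees $(0,*)$ and $(1,1)$; the previous paragraph upgrades the $(0,*)$ generators to degree $(0,2)$; hence the Rees ideal is generated in degrees $(0,2)$ and $(1,1)$, which is the claim. One should remark, as the paper does for the single-SEP case citing \cite[Theorem 3.5]{N}, that this is the natural generalization of that theorem, and that the argument is purely formal once Theorem \ref{thm:main} is in hand.

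**The main obstacle** I anticipate is bookkeeping rather than mathematics: one must be careful that ``generated in degree $(0,*)$ by quadrics'' really transfers cleanly through the non-minimal-to-minimal variable change, i.e.\ that reducing modulo $\Ll_B$ does not secretly require higher-degree relations. But this is exactly why the proof of Theorem \ref{thm:main} was organized around $\J_B = \E_B + \Ll_B$ with $\E_B$ generated by quadrics — the quadratic generators of $\E_B$ remain quadratic (and remain symmetric exchange relations) after passing to $\K[Y']$, so there is no hidden degree inflation. No new estimates or exchange arguments are needed beyond what Section \ref{sec:proof} already establishes.
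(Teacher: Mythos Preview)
Your proposal is correct and follows essentially the same route as the paper's own proof: apply Lemma~\ref{lem:rees} (using that polymatroidal ideals are of fiber type with linear resolution) to get generators in degrees $(0,*)$ and $(1,1)$, then identify the degree-$(0,*)$ part with $\J_B$ and invoke Theorem~\ref{thm:main}\eqref{thm-main1} to conclude it is generated in degree $(0,2)$. Your extra paragraph on the passage from the non-minimal variable set $Y$ to the minimal one $Y'$ via $\Ll_B$ is more careful than the paper, which simply asserts the identification; this is a valid point to make explicit but introduces no new mathematical content.
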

\begin{proof}
Let $B$ be a product of bases with the SEP, and let $\I$ be the Rees ideal of the ideal generated by $B$. By Lemma \ref{lem:rees} $\I$ is generated in degrees $(0,*)$ and $(1,1)$. The set of elements of degree $(0,*)$ in $\I$ is precisely the toric ideal $\J_B$. Then the result follows by Theorem \ref{thm:main}.
\end{proof}

As the generators of the Rees ideals in Theorem \ref{thm:rees} has total degree 2 we conclude this section with the following question.

\begin{que}
Is the Rees algebra of a product of ideals with the SEP Koszul?
\end{que}

\section{Gröbner bases and transversal polymatroidal bases}
Polymatroids that are given as products of subsets of $X$ are called \emph{transversal}. This is a special case of the class of polymatroids considered in this paper, as subsets of $X$ indeed have the SEP. Theorem \ref{thm:main} (\ref{thm-main2}) was proved for transversal polymatroids in \cite[Theorem 3.5]{Conca}, which in this case says that $\K[B]$ is isomorphic to the Segre product of polynomial rings modulo an ideal of linear binomials. Moreover \cite[Theorem 3.5]{Conca} states that $\K[B]$ is Koszul when $B$ is transversal. By Theorem \ref{thm:main} (\ref{thm-main1}) we now also know that White's conjecture holds for transversal polymatroids. However, the following questions are still open. 

\begin{que}
Does the toric ideal of a transversal polymatroid, or more generally a product of bases with the SEP, have a quadratic Gröbner basis?
\end{que}
\begin{que}
If $B$ is a product of bases with the SEP, is $\K[B]$ Koszul?
\end{que}

It is not obvious which monomial order to use when searching for a quadratic Gröbner basis of $\J_B$. Let $B=X_1 \cdots X_s$, where $X_1, \ldots, X_s$ are subsets of $X$. Then $\J_B$ is generated by the so called \emph{Hibi relations} together with binomials of degree one, see \cite[Proposition 3.7]{Conca}. The Hibi relations are defined in the following way. We fix an order of the variables in each $X_i$. The orderings do not have to be compatible with each other. Then we let $Y$ be the set of variables indexed by integer vectors $\alpha=(\alpha_1, \ldots, \alpha_s)$ so that $1 \le \alpha_j \le |X_j|$. The toric ideal $\J_B$ is the kernel of $\varphi: K[Y] \to \K[X]$ where $\varphi(y_\alpha)$ is defined as the product of the $\alpha_1$-th element of $X_1$, the $\alpha_2$-th element of $X_2$, and so on. The Hibi relations are all relations $y_\alpha y_\beta - y_{\alpha \wedge \beta}y_{\alpha \vee \beta}$ where $\alpha \wedge \beta = (\max(\alpha_1, \beta_1) , \ldots, \max(\alpha_s, \beta_s))$ and $\alpha \vee \beta =  (\min(\alpha_1, \beta_1) , \ldots, \min(\alpha_s, \beta_s)).$
We have a partial ordering on the variables $Y$ by $y_\alpha \le y_\beta$ if $\alpha_i \le  \beta_i$ for all $i$. Notice that the Hibi relation $y_\alpha y_\beta - y_{\alpha \wedge \beta}y_{\alpha \vee \beta}$ is nonzero only if $y_\alpha$ and $y_\beta$ are incomparable. The Hibi relations are a Gröbner basis w.\,r.\,t.\ the DegRevLex order with any order on $Y$ extending the partial order. The leading term of $y_\alpha y_\beta - y_{\alpha \wedge \beta}y_{\alpha \vee \beta}$ is $y_\alpha y_\beta$. We make the following observation.

\begin{prop}\label{prop:trans_GB}
Let $B=X_1 \cdots X_s$ be a transversal polymatroidal basis such that $\J_B$ is generated by the Hibi relations together with one binomial of degree one. Then $\J_B$ has a quadratic Gröbner basis. Moreover, if $|X_1|= \dots = |X_s|$ then $\K[B]$ is Gorenstein. 
\end{prop}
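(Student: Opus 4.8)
The plan is to prove the two assertions separately, with the Gröbner basis statement coming directly from the structure of $\J_B$ and the Gorenstein statement from a combinatorial count on the Segre product. First I would address the quadratic Gröbner basis. Under the hypothesis, $\J_B$ is generated by the Hibi relations $y_\alpha y_\beta - y_{\alpha \wedge \beta}y_{\alpha \vee \beta}$ together with a single binomial of degree one, say $y_\gamma - y_\delta$. Pick a monomial order on $\K[Y]$ of DegRevLex type refining the partial order on $Y$, tweaked so that $y_\gamma > y_\delta$ for the degree-one relation. The Hibi relations alone form a quadratic Gröbner basis (as recalled just before the proposition, with leading terms $y_\alpha y_\beta$ for incomparable $\alpha,\beta$), so the task is to check that adding $y_\gamma - y_\delta$ does not create new $S$-pair obstructions beyond quadratic ones. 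I would run Buchberger's criterion: the only $S$-polynomials to examine are between $y_\gamma - y_\delta$ and a Hibi relation whose leading term $y_\alpha y_\beta$ is divisible by $y_\gamma$, i.e.\ with $\gamma \in \{\alpha,\beta\}$. The resulting $S$-polynomial is $y_\delta y_\beta - y_{\alpha\wedge\beta}y_{\alpha\vee\beta}$ (up to relabeling); since $\varphi(y_\delta) = \varphi(y_\gamma)$ this binomial lies in $\J_B$, is again quadratic, and reduces to zero using the Hibi relations and the degree-one relation — I would spell out that $y_\delta y_\beta$ reduces either directly (if $\delta,\beta$ incomparable, via the Hibi relation on $\delta,\beta$) or is already a standard monomial, and in both cases one lands back on $y_{\alpha\wedge\beta}y_{\alpha\vee\beta}$ modulo $y_\gamma - y_\delta$. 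This shows the given generating set is already a Gröbner basis, hence quadratic.

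For the Gorenstein claim, assume $|X_1| = \dots = |X_s| =: q$. Then $\K[B_1] = \dots = \K[B_s] = \K[x_1,\dots,x_q]$ (polynomial ring in $q$ variables, since each $X_i$ is a set of variables), and by Theorem~\ref{thm:main}(\ref{thm-main2}) — or directly by \cite[Theorem 3.5]{Conca} — we have $\K[B] \cong (\K[x_1,\dots,x_q])^{\circ s}/L$ with $L$ generated by linear binomials. But the $s$-fold Segre product of a polynomial ring in $q$ variables with itself is the Segre product coordinate ring, which is a well-known normal Cohen–Macaulay domain. The key point I would invoke is that the Segre product of $s$ copies of $\K[x_1,\dots,x_q]$ is Gorenstein precisely in this balanced situation: its canonical module is $\omega = \bigoplus_{i \ge 0} (\omega_i^{\otimes s})$ suitably shifted, and since each factor $\K[x_1,\dots,x_q]$ has canonical module $\cong \K[x_1,\dots,x_q](-q)$, the multi-graded pieces match up and $\omega$ becomes a rank-one free module exactly when all $q_i$ are equal. (Concretely, the $a$-invariant computation, or the Stanley criterion via the Hilbert series $H(t)$ satisfying $H(1/t) = \pm t^{a} H(t)$, goes through for the balanced Segre product.) Quotienting by an ideal of linear forms $L$ that are binomial in the $Y$-variables corresponds to identifying some coordinates; I would argue this quotient stays Gorenstein either by checking that $L$ is generated by a regular sequence of linear forms (so that Gorenstein descends along $R \mapsto R/(\text{regular sequence})$), or by noting that $\K[B]$ is itself a normal semigroup ring whose polytope is the fiber polytope and applying Stanley's criterion (the interior of the polytope contains a lattice point whose multiples recover the whole interior).

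The main obstacle I expect is the second part: verifying that passing from the balanced Segre product to $\K[B] = (\text{Segre})/L$ preserves Gorensteinness. If $L$ is not generated by a regular sequence — which can happen when the orderings on the $X_i$ are incompatible and the linear binomials collapse several variables at once — then "Gorenstein modulo linear forms" needs the regular-sequence hypothesis, so I would instead work intrinsically with $\K[B]$ as the affine semigroup ring of the base polymatroid and apply the Danilov–Stanley criterion for Gorensteinness of normal semigroup rings, reducing it to: the fiber polytope of $B_1\cdots B_s$ (which is a polymatroid base polytope) has a unique lattice point $\omega$ in its relative interior such that $\omega$ generates the interior lattice points, and under $|X_1| = \dots = |X_s|$ this point is the "center" $(1,\dots,1)$ repeated appropriately. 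Checking that claim amounts to a short computation with the defining inequalities $\min_{B_i} \le \alpha \le \max_{B_i}$ of the Veronese-type description in Theorem~\ref{thm:veronese_type}, which I would carry out but not belabor here.
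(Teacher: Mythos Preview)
Your outline misses the main simplifying move the paper makes in both parts.

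\textbf{Part 1 (Gr\"obner basis).} The paper does \emph{not} run Buchberger's algorithm on an arbitrary linear binomial $y_\gamma - y_\delta$. Instead it exploits the freedom, recalled just before the proposition, to \emph{reorder} the elements of each $X_i$ before fixing the labelling. Given the single linear relation $y_\gamma - y_\delta$, one reorders so that $\gamma = (|X_1|,\dots,|X_s|)$ is the unique maximal element and $\delta = (1,\dots,1)$ is the unique minimal element of the index poset. Then $y_\gamma$ never divides the leading term of a Hibi relation (the leading term $y_\alpha y_\beta$ has $\alpha,\beta$ incomparable, and the top element is comparable to everything), so substituting $y_\gamma \mapsto y_\delta$ leaves all leading terms untouched and the Gr\"obner basis property is immediate. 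Your approach, by contrast, tries to handle a linear relation with arbitrary $\gamma$, which forces you into a case analysis of $S$-pairs that you do not actually carry out. In particular, your ``tweak so that $y_\gamma > y_\delta$'' may conflict with the requirement that the monomial order refine the partial order on indices, and your claim that the $S$-polynomial $y_\delta y_\beta - y_{\alpha\wedge\beta}y_{\alpha\vee\beta}$ reduces to zero is not justified when $\gamma$ is an arbitrary interior element of the poset.

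\textbf{Part 2 (Gorenstein).} You appear to have lost track of the hypothesis: there is exactly \emph{one} binomial of degree one, so $L$ is a principal ideal. The Segre product $S$ of $s$ copies of a polynomial ring in $m$ variables is Gorenstein (the paper cites Goto--Watanabe) and a domain; a single nonzero linear form in a domain is automatically a regular element, so $S/L$ is Gorenstein. There is no need to worry about whether a regular \emph{sequence} exists, nor to fall back on the Danilov--Stanley criterion for the semigroup ring of $B$ itself. Your alternative route via the polytope interior would be a substantially different proof and would require its own argument; the paper avoids it entirely.
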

\begin{proof}
We may order the variables in each $X_i$ so that the one binomial of degree one in $\J_B$ is $y_\alpha-y_\beta$ with $\alpha=(|X_1|, \ldots, |X_s|)$ and $\beta=(1,1, \ldots, 1)$. The Hibi relations are a Gröbner basis, and we shall prove that they remain so after replacing every occurrence of $y_\alpha$ by $y_\beta$. First notice that $y_\alpha$ does not divide the leading term of any Hibi relation. Indeed, $y_\alpha \ge y_\gamma \ge y_\beta$ for any $y_\gamma \in Y$. This shows that replacing $y_\alpha$ by $y_\beta$ does not change the leading term of any binomial in $\J_B$.

Now assume $|X_1|= \dots = |X_s|=m$. Then $\K[B] \cong S/L$ there $S$ is the Segre product of $s$ copies of the polynomial ring in $m$ variables, and $L$ is generated by a binomial of degree one. The Segre product of several of copies of a Gorenstein ring is again Gorenstein, by a result in \cite{GW}. The polynomial ring is Gorenstein, so $T$ is Gorenstein. $T$ is also a domain, and hence $T/L$ is Gorenstein. 
\end{proof}

\begin{ex}
Let $B=\{x_1,x_2\}\{x_2,x_3\}\{x_3,x_4\}\{x_4,x_5\}\{x_5,x_1\}$. Then $Y$ is the set of variables indexed by integer vectors $\alpha=(\alpha_1, \ldots, \alpha_5)$ where $1 \le \alpha_i \le 2$. We have $\varphi(y_{11111})=x_1x_2x_3x_4x_5$ and $\varphi(y_{22222})=x_2x_3x_4x_5x_1$, and $y_{22222}-y_{11111}$ is the only linear relation in $\J_B$. Hence the Hibi relations, with every occurrence of $y_{22222}$ replaced by $y_{11111}$, is a quadratic Gröbner basis defining $\K[B]$. One example of a Hibi relation is $y_{11122}y_{12221}-y_{11121}y_{12222}$ where
\begin{align*}
&\varphi(y_{11122})=x_1x_2x_3x_5x_1, \ \  \varphi(y_{12221})=x_1x_3x_4x_5x_5,\\
& \varphi(y_{11121})=x_1x_2x_3x_5x_5, \ \mbox{and} \ \varphi(y_{12222})=x_1x_3x_4x_5x_1.
\end{align*}
The Segre product of five polynomial rings in two variables is a Gorenstein algebra with Hilbert series 
\[
\frac{1+26t+66t^2+26t^3+t^4}{(1-t)^6}.
\]
Hence $K[B]$ is Gorenstein with Hilbert series 
\[
\frac{1+26t+66t^2+26t^3+t^4}{(1-t)^5}. \qedhere
\]
\end{ex}
 We finish with a question inspired by Proposition \ref{prop:trans_GB}. 
\begin{que}
Which transversal polymatroidal bases define Gorenstein algebras?
\end{que}

%

\section*{Acknowledgement}
I would like to thank Aldo Conca for our discussion about Gröbner bases and transversal polymatroids which led to the content of Section 5. 

\bibliography{polymatroidal}{}
\bibliographystyle{plain}

\end{document}